\let\OLDthebibliography\thebibliography
\renewcommand\thebibliography[1]{
  \OLDthebibliography{#1}
  \setlength{\parskip}{0pt}
  \setlength{\itemsep}{0pt plus 0.3ex}
}
\begin{document}

\baselineskip=15.5pt
\renewcommand{\arraystretch}{2}
\arraycolsep=1pt

\theoremstyle{plain}
\newtheorem{theorem}{Theorem}[section]
\newtheorem{prop}[theorem]{Proposition}
\newtheorem{lemma}[theorem]{Lemma}
\newtheorem{cor}[theorem]{Corollary}
\newtheorem{example}[theorem]{Example}
\newtheorem{remark}[theorem]{Remark}
\newcommand{\ra}{\rightarrow}
\renewcommand{\theequation}
{\thesection.\arabic{equation}}

\newtheorem*{TheoremA}{Theorem A}
\newtheorem*{TheoremB}{Theorem B}
\newtheorem*{TheoremC}{Theorem C}

\theoremstyle{definition}
\newtheorem{definition}[theorem]{Definition}

\newcommand{\wtx}{\widetilde{{\bf x}}}
\newcommand{\wty}{\widetilde{{\bf y}}}
\newcommand{\wtz}{\widetilde{{\bf z}}}

\newcommand{\XX}{\mathcal {X}}
\newcommand{\ZZ}{\mathbb{Z}}
\newcommand{\G}{\mathop G\limits^{\circ}}
\newcommand{\GG}{{\mathop G\limits^{\circ}}_{\vartheta}}
\newcommand{\GGp}{{\mathop G\limits^{\circ}}_{\vartheta_1,\vartheta_2}}

\title
{\bf\Large On Fefferman--Stein type inequality on Shilov boundaries\\ and applications
\footnotetext{ {\it Mathematics Subject Classification (2020)}: Primary 32A50,32T25; Secondary 43A85, 32W30}
\footnotetext{ {\it Key words:}
Shilov boundary, finite type domains, good-$\lambda$ inequality. }
}

\author{Ji Li}

\date{ }

\maketitle

\begin{center}
\begin{minipage}{14.9cm}
{\small{\bf Abstract:} In this paper, we establish the Fefferman--Stein type inequality for area integral and non-tangential maximal function on the Shilov boundary studied by Nagel and Stein \cite{NS04}. The technique here is inspired by Fefferman--Stein \cite{FS72} and Merryfield \cite{M} but we bypass the use of Fourier or group structure as these were not available on the polynomial domains of finite type. Direct applications include the maximal function characterisation of product Hardy space and the weak type endpoint estimate for product Calder\'on--Zygmund operators (such as the Cauchy--Szeg\H{o} projection) on the Shilov boundary.}
\end{minipage}
\end{center}

\medskip

\section{Introduction}
\setcounter{equation}{0}
 
In this paper, we establish the Fefferman--Stein type good-$\lambda$ inequality for area integral and non-tangential maximal function on a typical product space: the Shilov boundary of tensor product domains studied by Nagel and Stein \cite{NS04}.

Let $u(x, t)$  be a harmonic function in $\mathbb R^{n}\times(0,\infty)$.
The non-tangential maximal function $u^*(x)= \sup_{|x-y|<t}|u(y,t)|$ and  area integral $S(u)(x)^2=\int_{|x-y|<t}|\nabla u(y,t)|^2 t^{1-n}dydt$
are two fundamental tools in the theory of singular integrals and the related function spaces. Fefferman and Stein \cite{FS72} first showed that $\|u^*\|_{L^p(\mathbb R^n)}\approx\|S(u)\|_{L^p(\mathbb R^n)}$, $0<p\leq1$,  when $u(x,t)\to0$ as $t\to\infty$ (for $1<p<\infty$ this was known in \cite{St1958}).
The key objects in their proof are the following inequality (\cite[(7.2)]{FS72})
\begin{align}\label{good lambda FS}
|\{x\in\mathbb R^n: S(u)(x)>\lambda\}|\lesssim|\{x\in\mathbb R^n: u^*(x)>\lambda\}|
+{1\over \lambda^2}\int_0^\lambda s|\{x\in\mathbb R^n: u^*(x)>s\}|ds
\end{align}
and the other inequality of the same type but with $u^*$ and $S(u)$ interchanged. When $u$ is given by the Poisson integral of $f$, a different proof of the $L^p$ norm, $0<p\leq1$, of $u^*$ and $S(u)$ was given via atomic decomposition. Later, Gundy and Stein \cite{GS} established this result in the bi-disc for characterising the product Hardy space $H^p$, $0<p\leq1$, via using holomorphic function and martingales. The key step mirrors \eqref{good lambda FS}, applied to the area integral and non-tangential maximal function in the bi-disc.
It is natural to explore whether this is also true on the product spaces $\mathbb R^n\times \mathbb R^m$, noting that in the higher dimensional space, the analyticity is not available. Unlike the one-parameter setting, the equivalence $\|u^*\|_{L^p(\mathbb R^n\times \mathbb R^m)}\approx\|S(u)\|_{L^p(\mathbb R^n\times \mathbb R^m)}$ (for $0<p\leq1$) does not follow from atomic decomposition directly.
It is still not clear whether one can construct the atomic decomposition of $f$ directly when $\|u^*\|_{L^p(\mathbb R^n\times \mathbb R^m)}<\infty$ ($u$ given by the double Poisson integral of $f$). 

To overcome this, Merryfield \cite{M} provided a new proof of  \eqref{good lambda FS}, which bypassed the use of surface approximation (\cite{FS72}) or analyticity (\cite{GS}), and hence the inequality $\|S(u)\|_{L^p(\mathbb R^n\times \mathbb R^m)}\lesssim \|u^*\|_{L^p(\mathbb R^n\times \mathbb R^m)}$ holds (the reverse can be done by atomic decomposition directly). However, one key ingredient in \cite{M} is the Cauchy--Riemann equation for constructing a test function (for Littlewood--Paley estimate) from the given test function (for maximal function).  Thus, although we have studied the Hardy spaces via area function and atomic decomposition in the general product spaces of homogeneous type (see for example \cite{HLL2, CDLWY,HLPW}), the maximal function characterisation was only known in a few cases via establishing an analogy of \eqref{good lambda FS}: (1) the Muckenhoupt--Stein Bessel operator setting (\cite{DLWY}), where we exploited the Cauchy--Riemann type equation associated with the Bessel operators; (2) the multi-parameter flag setting of Nagel--Ricci--Stein \cite{NRS} (in  \cite{HLLW}), where we extended \cite{M} to the flag Euclidean setting, (3) the product stratified Lie groups \cite{CFLY} and the flag setting of Heisenberg group \cite{CCLLO}, where we used the group structure and explicit pointwise upper and lower bound of Poisson kernel.

Besides the bi-disc and product Lie groups, one fundamental model domain is the Shilov boundary $\widetilde M =M_1\times  M_2$ studied by Nagel and Stein \cite{NS04} which links to the $\bar\partial$-Neumann problem on decoupled  boundaries (to ease the burden of notational complexity we consider the tensor product of two domains).
Here
 each $M_j$ is an unbounded polynomial domain of finite type $m_j$ defined  as follows. 

Let $M$ (for simplicity we first drop the subscript $j$) be given as  
\begin{align}\label{manifold M}
  M:=\big\{ (z,w)\in\mathbb C^2:\  {\rm Im}(w)= \mathcal P(z) \big\},
\end{align}

\noindent where $ \mathcal P(z)$ is
a real, subharmonic, non-harmonic polynomial of degree $m$. We note that (\cite{NS04}) $M$ can be identified with 
$\mathbb C\times\mathbb R=\{ (z,\mathfrak t): z\in\mathbb C, \mathfrak t\in\mathbb R\}$. The basic (0,1) Levi vector field is then 
$\bar Z= {\partial\over\partial \bar z}-i {\partial \mathcal P\over\partial \bar z}{\partial\over\partial \mathfrak t}$, where $i^2=-1$, and we write $\bar Z = X_1+iX_2$. The real vector fields $\{X_1, X_2\}$ and their commutators of orders up to $m$  span the tangent space at each point.
Associated with the domain $M$ is the natural Carnot--Carath\'eodory metric $d({\bf x},{\bf y})$ on $M$ (for every ${\bf x},{\bf y}\in M$) and the measure $\mu$ of the nonisotropic ball $B({\bf x},r)=\{{\bf y}\in M: d({\bf x},{\bf y})<r\}$, which made $(M,d,\mu)$ as a space of homogeneous type in the sense of Coifman and Weiss \cite{CW}. Also denote $V_r({\bf x})=\mu(B({\bf x},r))$. Details are given in Section 2. 

The typical example that we have in mind (regarding further development and applications \cite{NS06,NRSW12,NRSW18}) is $\mathcal P(z)=\frac{1}{2k}|z|^{2k}$ for $z\in \mathbb C$ and $k$ to be positive integers. When $k=1$, $M$ is the boundary of the Siegel domain in $\mathbb C^2$ and it is CR-diffeomorphic to the first Heisenberg group, from which it inherits the group structure. However,  when $k\geq2$, $M$ does not have a group structure. Thus, we are more interested in the cases $k\geq2$. The $\bar\partial$-problem, sub-Laplacian,  Kohn-Laplacian and the related geometry, singular integrals have been intensively studied, see for example \cite{Chr,CNS,Diaz,NS01a,NS04,NRSW,Str}.  Very recent progresses on such model domains $M$ ($k\geq2$) include the explicit pointwise upper and lower bound for the Cauchy--Szeg\H{o} kernel and its applications to boundedness and compactness of commutators (\cite{CLTW}), and the construction that for each $k\geq2$, $M$ can be lifted to a Lie group $G$ (\cite{CLOW}), which  provided an explicit and optimal lifting Lie algebra comparing to the result of Rothschild and Stein \cite{RS75}. 

We now state our result in detail. Let $\widetilde M=M_1\times M_2$, where each $M_j$ is the example domain as above with $\mathcal P_j(z)=\frac{1}{2k_j}|z|^{2k_j}$, $j=1,2$. Let $d_j$ be the Carnot--Carath\'eodory metric on $M_j$ and $\mu_j$ be the corresponding measure. For the sake of simplicity, throughout the paper we denote $d\mu_j({\bf x}_j)=d{\bf x}_j$ and $|A|$ represents the measure of the set $A$.
Let $\mathcal L_j$ be the sub-Laplacian of $M_j$ and $P_{t_j}^{[j]}$ be the Poisson semigroup $ e^{-t_j\sqrt{\mathcal L_j}}$, $j=1,2$.
Consider the non-tangential maximal function 
$$
\mathcal{N}_P^{\beta}(f)({\bf x}_{1},{\bf x}_{2}):=\sup\limits_{\substack{ ({\bf y}_{1},{\bf y}_{2})\in\widetilde M,t_1>0,t_2>0,\\d_{1}({\bf x}_{1},{\bf y}_{1})<\beta t_{1},\\ d_{2}({\bf x}_{2},{\bf y}_{2})<\beta t_{2}}}|P_{t_1}^{[1]}P_{t_2} ^{[2]}(f)({\bf y}_{1},{\bf y}_{2})|
$$
with the constant $\beta>0$.
Consider also the Littlewood--Paley area function. Let
$$\nabla_{t_{1},M_{1}}:=(\partial_{t_{1}}, X_{1,1}, X_{1,2}),\quad \nabla_{t_{2},M_{2}}:=(\partial_{t_{2}}, X_{2,1}, X_{2,2}).$$
Then for any fixed $\beta\in(0,\infty)$, the Littlewood--Paley area function $S_P^{\beta}(f)({\bf x}_{1},{\bf x}_{2})$ 
is defined as
\begin{align*}
S_P^{\beta}(f)({\bf x}_{1},{\bf x}_{2})
:=\left(\iint_{\Gamma^{\beta}({\bf x}_{1},{\bf x}_{2})}|t_{1}\nabla_{t_{1},M_{1}}P_{t_1}^{[1]}\ t_{2}\nabla_{t_{2},M_{2}}P_{t_2}^{[2]}(f)({\bf y}_{1},{\bf y}_{2})|^{2}\frac{d{\bf y}_{1}d{\bf y}_{2}dt_{1}dt_{2}}{t_{1}V_{t_1}({\bf x}_1)t_{2}V_{t_2}({\bf x}_2)}\right)^{1/2},
\end{align*}
where $\Gamma^{\beta}({\bf x}_{1},{\bf x}_{2})=\Gamma_1^{\beta}({\bf x}_{1})\otimes\Gamma_2^{\beta}({\bf x}_{2})$
and $ \Gamma_j^{\beta}({\bf x}_{j})=\{ ({\bf y}_{j},t_j)\in M_j\times \mathbb R_+:  d_j( {\bf x}_{j},{\bf y}_{j} )<\beta t_j\} $, $j=1,2$.
For simplicity, we denote $S_P^{1}(f)({\bf x}_{1},{\bf x}_{2})$ by $S_P(f)({\bf x}_{1},{\bf x}_{2})$.

The main result of this paper is the following.
\begin{theorem}\label{main thm}
There exist $C>0$ and $\beta>1$ such that for all $f\in C_0^\infty(\widetilde M)$ and for all $\lambda>0$,
\begin{align}\label{good lambda}
&|\{({\bf x}_1,{\bf x}_2)\in \widetilde M:\  S_P(f)({\bf x}_1,{\bf x}_2)>\lambda\}|\\
&\leq C\big|\big\{({\bf x}_1,{\bf x}_2)\in \widetilde M:  \mathcal{N}_{P}^{\beta}(f)({\bf x}_1,{\bf x}_2)>\lambda\big\}\big|+\frac{C}{\lambda^{2}}\int_0^\lambda s|\{({\bf x}_1,{\bf x}_2)\in \widetilde M: \mathcal{N}_{P}^{\beta}(f)({\bf x}_1,{\bf x}_2)>s\}|ds.\nonumber
\end{align}
\end{theorem}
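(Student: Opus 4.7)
\emph{Approach.} I plan to adapt the Merryfield-type scheme developed in \cite{M,HLLW,CFLY,CCLLO} to the polynomial finite-type product $\widetilde M = M_1\times M_2$, replacing all uses of Fourier analysis or group convolution (unavailable on $M_j$ for $k_j\geq 2$) by semigroup / spectral-calculus identities for each $\mathcal L_j$ together with the intrinsic Poisson-kernel estimates of \cite{NS04,CLTW,CLOW}. Writing $\Omega_\lambda = \{\mathcal N_P^\beta(f)>\lambda\}$, the inclusion $\{S_P(f)>\lambda\} \subset \Omega_\lambda \cup (\{S_P(f)>\lambda\}\setminus\Omega_\lambda)$ and Chebyshev reduce the theorem to the quantitative $L^2$-type bound
$$\int_{\widetilde M\setminus\Omega_\lambda} S_P(f)({\bf x}_1,{\bf x}_2)^2\, d{\bf x}_1 d{\bf x}_2 \;\lesssim\; \lambda^2|\Omega_\lambda| + \int_0^\lambda s\,|\Omega_s|\,ds,$$
which is the inequality I would actually establish.

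Unfolding the definition of $S_P$ and applying Fubini, the left-hand side becomes
$$\iiiint |t_1\nabla_{t_1,M_1}P_{t_1}^{[1]}\, t_2\nabla_{t_2,M_2}P_{t_2}^{[2]} f({\bf y}_1,{\bf y}_2)|^2 \,W_\lambda({\bf y}_1,{\bf y}_2,t_1,t_2)\, d{\bf y}_1 d{\bf y}_2 \frac{dt_1\, dt_2}{t_1 t_2},$$
with the weight
$$W_\lambda({\bf y}_1,{\bf y}_2,t_1,t_2) = \int_{\widetilde M\setminus\Omega_\lambda} \frac{{\bf 1}_{d_1({\bf x}_1,{\bf y}_1)<t_1}}{V_{t_1}({\bf x}_1)}\,\frac{{\bf 1}_{d_2({\bf x}_2,{\bf y}_2)<t_2}}{V_{t_2}({\bf x}_2)}\, d{\bf x}_1 d{\bf x}_2 \;\lesssim\; 1$$
by doubling. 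The heart of the argument is a Merryfield-type factorization on each factor: using $P_t^{[j]} = e^{-(t/2)\sqrt{\mathcal L_j}}\cdot e^{-(t/2)\sqrt{\mathcal L_j}}$ and commuting with $X_{j,i}$, I would write $t_j\nabla_{t_j,M_j} P_{t_j}^{[j]} = {\mathcal Q}^{[j]}_{t_j}\circ {\mathcal P}^{[j]}_{t_j/2}$, where ${\mathcal P}^{[j]}_{t/2}$ has a positive, mass-preserving, non-isotropic kernel whose pointwise action at $({\bf y}_j,t_j)$ is dominated, once $\beta$ is taken sufficiently large, by $\mathcal N_P^\beta(f)({\bf x}_1,{\bf x}_2)$ at every cone tip with $d_j({\bf x}_j,{\bf y}_j)<t_j$, and where $\{{\mathcal Q}^{[j]}_t\}_{t>0}$ satisfies the Calder\'on reproducing bound $\int_0^\infty\|{\mathcal Q}^{[j]}_t g\|_{L^2(M_j)}^2\,\frac{dt}{t} \lesssim \|g\|_{L^2(M_j)}^2$.

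With this factorization I would introduce a stopping-time truncation $g_\lambda$ of $f$ adapted to the sub-level sets $\Omega_{s}$ for $s\leq\lambda$; the integrand then rewrites as $|{\mathcal Q}^{[1]}_{t_1}{\mathcal Q}^{[2]}_{t_2}h_\lambda({\bf y}_1,{\bf y}_2)|^2$ with $h_\lambda = {\mathcal P}^{[1]}_{t_1/2}{\mathcal P}^{[2]}_{t_2/2} g_\lambda$, to which the tensor-product $L^2$ Littlewood--Paley estimate applies. Pointwise bounds $|h_\lambda|\lesssim \lambda$ on the piece of $g_\lambda$ coming from $\widetilde M\setminus\Omega_\lambda$ and $|h_\lambda|\lesssim s$ on the $s$-th layer produce the two terms $\lambda^2|\Omega_\lambda|$ and $\int_0^\lambda s|\Omega_s|\,ds$ respectively. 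The chief obstacle is the Merryfield factorization in the intrinsic setting: the size, cancellation, and smoothness of the kernels of ${\mathcal Q}^{[j]}_t$ and ${\mathcal P}^{[j]}_{t/2}$ must be extracted from the Poisson-kernel estimates of \cite{NS04,CLTW,CLOW} together with spectral calculus for $\mathcal L_j$, without any recourse to Fourier inversion or group convolution. A secondary difficulty is that $\Omega_\lambda$ is a generic open subset of $\widetilde M$, not a union of rectangles, so the two-parameter layer-cake organization must be carried through in the spirit of a Journ\'e-type covering to deliver the clean right-hand side of the good-$\lambda$ inequality.
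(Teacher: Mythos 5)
Your proposal diverges from the paper's argument at the crucial step, and as written it has a gap that would prevent it from closing.

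The paper's proof does not truncate $f$, and it does not rely on a factorization $t_j\nabla_{t_j,M_j}P_{t_j}^{[j]}=\mathcal Q^{[j]}_{t_j}\circ\mathcal P^{[j]}_{t_j/2}$. Instead it introduces the auxiliary set $A_\beta(\alpha)=\{\mathcal M_S(\chi_{E_\beta(\alpha)^c})\le 1/(10C_0)\}$ (a \emph{strong-maximal} refinement of your $\widetilde M\setminus\Omega_\alpha$, which you omit and which is needed so that $P^{[1]}_{t_1}P^{[2]}_{t_2}(\chi_{E_\beta(\alpha)})$ is close to $1$ on the saw-tooth), the auxiliary function $g=\chi_{E_\beta(\alpha)}$, and a smooth cut-off $\varphi$ with $\varphi\equiv 1$ on $[9/10,\infty)$ and $\varphi\equiv 0$ on $(-\infty,C_1]$. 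The quantity $\varphi\bigl(P^{[1]}_{t_1}P^{[2]}_{t_2}(g)\bigr)^2$ acts as a smooth indicator of the saw-tooth over the good set. The estimate then proceeds from the harmonicity identity $2|\nabla_{t_j,M_j}u_j|^2=-\Delta_{t_j,M_j}(u_j^2)$ combined with the product rule for $\Delta_{t_j,M_j}$ applied to $u^2\cdot\varphi(\cdot)^2$, followed by integration by parts in $({\bf y}_1,t_1)$, then again in $({\bf y}_2,t_2)$. This is a Green-theorem argument: the boundary terms produce the $\iint_{E_\beta(\alpha)}\mathcal N_P^\beta(f)^2$ contribution, and the bulk error terms (supported where $\varphi'$, $\varphi''$ are nonzero, hence where $|P_{t_1}P_{t_2}f|\lesssim\alpha$) are absorbed via Young's inequality, vector-valued maximal estimates, and Littlewood--Paley $L^2$ bounds applied to $1-g=\chi_{E_\beta(\alpha)^c}$.

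Here are the concrete problems with your route. First, $h_\lambda=\mathcal P^{[1]}_{t_1/2}\mathcal P^{[2]}_{t_2/2}g_\lambda$ depends on $(t_1,t_2)$, so the Calder\'on/Littlewood--Paley bound $\int_0^\infty\|\mathcal Q^{[j]}_t g\|_{L^2}^2\,dt/t\lesssim\|g\|_{L^2}^2$ does not apply: that bound requires $\mathcal Q_t$ acting on a single fixed $L^2$ function, not on a family that itself varies with $t$. Second, if $g_\lambda$ is a spatial truncation of $f$ (whatever "stopping-time truncation" means precisely), then $\mathcal Q^{[1]}_{t_1}\mathcal Q^{[2]}_{t_2}\mathcal P^{[1]}_{t_1/2}\mathcal P^{[2]}_{t_2/2}g_\lambda$ is \emph{not} equal to $t_1\nabla_{t_1,M_1}P^{[1]}_{t_1}\,t_2\nabla_{t_2,M_2}P^{[2]}_{t_2}f$ on the region of interest, and you would have to control the discrepancy $t\nabla P(f-g_\lambda)$ on the saw-tooth --- this commutator term is exactly what the paper's $\varphi$-weighted Green argument is engineered to handle, and your proposal has no mechanism for it. Third, the factorization by itself provides no localization: $\mathcal Q_t$ is a nonlocal operator, so the pointwise bound $|\mathcal P_{t/2}f|\lesssim\lambda$ at cone tips over the good set cannot be pushed through $\mathcal Q_t$ to give $|\mathcal Q_t\mathcal P_{t/2}f|\lesssim\lambda$; this is why Merryfield's original argument (and all of its descendants, including this paper) requires an integration-by-parts/Green-theorem step rather than a bare square-function bound. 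You have cited the right sources and identified the right reduction (to the $L^2$ bound over $\widetilde M\setminus\Omega_\lambda$ and the layer-cake form of the right-hand side), but the heart of the paper's argument --- the smooth cut-off $\varphi(P_{t_1}P_{t_2}\chi_{E_\beta(\alpha)})$ and the two-parameter integration by parts using harmonicity --- is absent from your outline, and no straightforward combination of semigroup splitting and Littlewood--Paley $L^2$ theory replaces it.
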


The main step here is to establish a modified version of good-$\lambda$ inequality, that is, to prove that 
$|\{({\bf x}_1,{\bf x}_2)\in A_\beta(\lambda):\  S_{P}(f)({\bf x}_1,{\bf x}_2)>\lambda\}|$ is bounded by the right-hand side of \eqref{good lambda}, where $A_\beta(\lambda)$ is a suitable subset of $E_\beta(\lambda):=\big\{({\bf x}_1,{\bf x}_2)\in \widetilde M:  \mathcal{N}_{P}^{\beta}(f)({\bf x}_1,{\bf x}_2)\leq\lambda\big\}$. The key idea that is different from \cite{M} is to inherit  the property of the double Poisson integral $u=P_{t_1}^{[1]}P_{t_2} ^{[2]}(\chi_{E_\beta(\lambda)})$ via a smooth function which captures the range of $u$. Thus, we bypass the restriction on a smooth function which is even, and has compact support. We only rely on the pointwise upper bound of the Poisson kernel as well as the conservation property. The proof will be given in Section \ref{Sec3} after some necessary preliminaries in Section \ref{Sec2}.

We further note that the above result also holds for $\widetilde M =M_1\times\cdots \times M_n$ for general $n>2$.
It suffices to repeat the proof by induction.

As applications, we point out that:  (1) Theorem \ref{main thm} passes the endpoint weak type estimate ($L\log L(\widetilde M)$ to weak $L^1(\widetilde M)$) from the strong maximal function $\mathcal M$ to the Littlewood--Paley area function $S_P^{\beta}$. Thus, through 
the approach from R. Fefferman \cite{RF} (local version) and the recent study \cite{CLLP} (global version), we obtain the $L\log L(\widetilde M)$ to weak $L^1(\widetilde M)$ for product Calder\'on--Zygmund operators on $\widetilde M$; (2) Theorem \ref{main thm} also gives rise to the characterisation of product Hardy space on $\widetilde M$, which was established via Littlewood--Paley area function and characterised equivalently by atomic decomposition \cite{HLL2,HLPW}.  Details are in Section \ref{Sec4}.

\section{Notation and preliminaries}\label{Sec2}
\setcounter{equation}{0}
In this section, we recall the basic geometry of the Shilov boundary $\widetilde M =M_1\times M_2$ \cite{NS04} with each $M_j$ given in \eqref{manifold M}, where $\mathcal P(z)=\frac{1}{2k}|z|^{2k}$, $k\geq2$. It is clear that the degree $m$ of $\mathcal P(z)$ is given by $m=2k$.

\subsection{Basic geometry of Carnot--Carath\'eodory space $M$}

We first recall the \emph{control metric on $M$} given in \cite{NS04} (see also \cite{NSW,NS01a, NS01b,Str}). Note that we write the complex (0,1) vector field $\overline{Z}=X_1+iX_{2}$, where $\{X_1, X_{2}\}$ are real vector fields on $M$. Define the metric $d$ on $M$ as follows. If ${\bf x},{\bf y}\in M$ and $\delta>0$, let $AC({\bf x},{\bf y},\delta)$ denote the set of absolutely continuous mapping $\gamma: [0,1]\rightarrow M$ such that $\gamma(0)={\bf x}$ and $\gamma(1)={\bf y}$, and such that for almost all $t\in[0,1]$ we have $\gamma'(t)=\alpha_1(t)X_1(\gamma(t))+ \alpha_{2}(t)X_{2}(\gamma(t))$ with $|\alpha_1(t)|^2+|\alpha_{2}(t)|^2<\delta^2$. Then we define
$$
  d({\bf x},{\bf y})=\inf\{\delta>0:\ AC({\bf x},{\bf y},\delta)\not=\emptyset\}.
$$

The corresponding nonisotropic ball is defined as 
$
  B({\bf x},\delta)=\{{\bf y}\in M:\ d({\bf x},{\bf y})<\delta\},
$
and let $V_\delta({\bf x})$ denotes its volume. 
From \cite{NS04} we know that there is a positive constant $C_d$ such that for every ${\bf x}\in M$, $\lambda\geq1$ and $\delta>0$,
\begin{eqnarray}\label{homogeneous dim}
V_{\lambda\delta}({\bf x})\leq C_d\lambda^{m} V_\delta({\bf x}).
\end{eqnarray}
We also set $ V({\bf x},{\bf y})=V_{d({\bf x},{\bf y})}({\bf x}).$ From the doubling property we observe that $ V({\bf x},{\bf y})\approx  V({\bf y},{\bf x})$ where the implicit constants are independent of ${\bf x}$ and ${\bf y}$.

\subsection{Sub-Laplacian on $M$}

Consider the sub-Laplacian $\mathcal {L}$ on $M$ in self-adjoint
form, given by
$\mathcal {L}=\sum_{j=1}^2 {X}_j^{*}{X}_j.$
Here $({X}_j^{*}\varphi, \psi)=(\varphi, {X}_j\psi)$,
where $(\varphi,\psi)=\int_{M} \varphi({\bf x})\bar{\psi}({\bf x})d{\bf x}
$, and $\varphi, \psi\in C_0^{\infty}(M)$, the space of $C^{\infty}$
functions on $M$ with compact support. 
In general, we have
${X}_j^{*}= -{X}_j+a_j$, where $a_j\in C^\infty(M)$. In our particular setting, we see that $a_j=0$. That is
\begin{eqnarray*}
\mathcal {L}=-\sum_{j=1}^2 {X}_j^2.
\end{eqnarray*}

The solution of the following initial value problem for the heat
equation,
\begin{eqnarray*}
{{\partial u} \over {\partial s}}({\bf x},s)+      \mathcal {L}_{\bf x} u({\bf x},s)=0
\end{eqnarray*}
with $u({\bf x},0)=f({\bf x})$, is given by $u({\bf x},s)=H_s(f)({\bf x})$, where $H_s$ is
the operator given via the spectral theorem by $H_s=e^{-s\mathcal
{L}}$, and an appropriate self-adjoint extension of the non-negative
operator $\mathcal {L}$ initially defined on $C_0^{\infty}(M)$. For $f\in L^2(M)$,
\begin{eqnarray*}
H_s(f)({\bf x})=\int_M H(s,{\bf x},{\bf y})f({\bf y})d{\bf y}.
\end{eqnarray*}
Moreover, $H(s,{\bf x},{\bf y})$ has some nice properties (see Proposition 2.3.1
in \cite{NS04} and Theorem 2.3.1 in \cite{NS01a}). We restate them
as follows:
\begin{itemize}
\item[(1)] $H(s,{\bf x},{\bf y})\in C^{\infty}\big([0,\infty)\times M\times M
\backslash \{s=0\ {\rm and}\ {\bf x}={\bf y} \}\big).$
\item[(2)] For every integer $N\geq 0$,
\begin{align*}
|\partial_s^j\partial_X^L\partial_Y^K H(s,{\bf x},{\bf y})|
 \lesssim \frac{\displaystyle 1 }{\displaystyle
(d({\bf x},{\bf y})+\sqrt{s})^{2j+K+L} } \frac{\displaystyle 1 }{\displaystyle
V({\bf x},{\bf y})+V_{\sqrt{s}}({\bf x}) +V_{\sqrt{s}}({\bf y}) } \bigg(\frac{\displaystyle
\sqrt{s} }{\displaystyle d({\bf x},{\bf y})+\sqrt{s} } \bigg)^{N},
\end{align*}
\item[(3)] For each integer $L\geq 0$ there exist an integer $N_L$ and a
constant $C_L$ so that if $\varphi\in C_0^{\infty}(B({\bf x}_0,\delta))$,
then for all $s\in(0,\infty),$
$$ |\partial_X^L H_s[\varphi]({\bf x}_0)|\leq C_L\delta^{-L}\sup_{\bf x}\sum_{|J|\leq N_L}\delta^{|J|}|\partial_X^J\varphi({\bf x})|. $$
\item[(4)] For all $(s,{\bf x},{\bf y})\in (0,\infty)\times M \times M$,
$H(s,{\bf x},{\bf y})=H(s,{\bf y},{\bf x}),$
$H(s,{\bf x},{\bf y})\geq 0$.
\item[(5)] For all $(s,{\bf x})\in (0,\infty)\times M$, $\int_M H(s,{\bf x},{\bf y})d{\bf y}=1.$
\item[(6)]  For $1\leq p\leq \infty$, $\|H_s[f]\|_{L^p(M)}\leq
\|f\|_{L^p(M)}$.
\item[(7)]  For every $\varphi\in C_0^{\infty}(M)$ and  $1\leq p<\infty$,
$\lim\limits_{s\rightarrow 0}\|H_s[\varphi]-\varphi\|_{L^p(M)}=0$.
\end{itemize}

\subsection{Poisson kernel estimate on $M$}
From (2) in Section 2.2, we see that there is a positive constant $C_H$ such that for all $s>0$, ${\bf x},{\bf y}\in M$,
\begin{align*}
|H(s,{\bf x},{\bf y})|
 \leq C_H  \frac{\displaystyle 1 }{\displaystyle
V_{\sqrt{s}}({\bf x}, {\bf y})+V_{\sqrt{s}}({\bf x}) +V_{\sqrt{s}}({\bf y}) } \bigg(\frac{\displaystyle
\sqrt{s} }{\displaystyle d({\bf x},{\bf y})+\sqrt{s} } \bigg)^{N}.
\end{align*}

Let $P_{t}({\bf x},{\bf y})$ be the kernel of the Poisson semigroup $\mathrm{e}^{-t\sqrt{\mathcal{L}}}$.
The estimates for $P_{t}({\bf x},{\bf y})$ follow from the subordination formula
\begin{equation*}
\mathrm{e}^{-t\sqrt{\mathcal{L}}}
=\frac{1}{2\sqrt{\pi}}\int_0^\infty
\frac{t\mathrm{e}^{-{t^2}\over{4s}}}{\sqrt{s}}\mathrm{e}^{-s \mathcal{L}}\frac{ds}{s},
\end{equation*}
the doubling property of the measure and the estimate of $|H(s,{\bf x},{\bf y})|$ as above. We have
that 
for each ${\bf x}\in M$ and $t>0$
\begin{equation}\label{int P =1}
    \int_M P_t({\bf x},{\bf y})d{\bf y}=1,
\end{equation}
and
that there exists $C_P>0$ such that  for each ${\bf x},{\bf y}\in M$ and $t>0$, 
\begin{equation}\label{P size}
    |P_t({\bf x},{\bf y})|\leq C_P \frac{1}{V({\bf x},{\bf y})+V_t({\bf x})+V_t({\bf y})}\cdot\frac{t}{t+d({\bf x},{\bf y})}.
\end{equation}
Here $C_P$ depends on $C_H$ and the upper dimension $m$ as in \eqref{homogeneous dim}.  From this size estimate we see that for every $f\in L^2(M)$,
\begin{equation}\label{P f M f}
    |P_t(f)({\bf x})|\leq C_P(C_d2^m+1) \mathcal M(f)({\bf x}),
\end{equation}
where $C_d$ and $m$ are the constants from \eqref{homogeneous dim} and 
$\mathcal M$ is the Hardy--Littlewood maximal operator such that 
$$\mathcal M(f)({\bf x}) =\sup_{B\ni x}{1\over |B|}\int_B |f({\bf y})| d{\bf y},$$
where $B$ runs over all metric balls in $M$.

\subsection{Basic geometry of Shilov boundary $\widetilde M=M_1\times M_2$}

Consider $\widetilde M=M_1\times M_2$ such that $M_j= \big\{(z_j,w_j)\in\mathbb{C}^{2}:\ {\rm Im} (w_{j})= \mathcal  P_j(z_j)
\big\}$ with the vector fields $X_{j,1}$ and $X_{j,2}$, $j=1,2$. We denote $\vec{\bf x}= ({\bf x}_1,{\bf x}_2)\in M_1\times M_2$.

The nonisotropic distance $d_j$ on $M_j$ can be regarded as a function on $M_j$ which depends only on the variables $(z_j,t_j)$, where $t_j={\rm Re} (w_{j})$. In addition, there is a nonisotropic metric $d_{\sum}$ on $\widetilde{M}$ induced by all real vector fields $\{X_{1,1},X_{1,2},X_{2,1},X_{2,2}\}$. If $\vec{\bf x},\vec{\bf y}\in \widetilde M$ and $\delta>0$, let $AC(\vec{\bf x},\vec{\bf y},\delta)$ denote the set of absolutely continuous mappings $\gamma:[0,1]\rightarrow \widetilde{M}$ such that $\gamma(0)=\vec{\bf x}$ and $\gamma(1)=\vec{\bf y}$, and such that for almost every $t\in [0,1]$ we have $\gamma'(t)=\sum_{j=1}^{2}(\alpha_{j,1}(t)X_{j,1}(\gamma(t))+\alpha_{j,2}(t)X_{j,2}(\gamma(t)))$ with $\sum_{j=1}^{2}(|\alpha_{j,1}(t)|^2+|\alpha_{j,2}(t)|^2)<\delta^2$. Then
$$
    d_{\sum}(\vec{\bf x},\vec{\bf y})=\inf\{\delta>0 \mid AC(\vec{\bf x},\vec{\bf y},\delta)\not=\emptyset\}.
$$

Similar to \eqref{P f M f}, we also have that for every $f\in L^2(\widetilde M)$,
\begin{equation}\label{P f M f prod}
    |P_{t_1}^{[1]}P_{t_2} ^{[2]}(f)({\bf x}_{1},{\bf x}_{2})|\leq C_0 \mathcal M_S(f)({\bf x}_1,{\bf x}_2),
\end{equation}
where $C_0$ depends on the constant in  \eqref{P f M f}, $P_{t_j}^{[j]}$ denotes the Poisson semigroup on $M_j$, and 
$\mathcal M_S$ is the strong maximal function on $\widetilde M$ such that 
$$\mathcal M_S(f)({\bf x}_1,{\bf x}_2) =\sup_{B_1\times B_2\ni ({\bf x}_1,{\bf x}_2)}{1\over |B_1\times B_2|}\int_{B_1\times B_2} |f({\bf y}_1,{\bf y}_2)| d{\bf y}_1d{\bf y}_2,$$
where $B_j$ runs over all metric balls in $M_j$ for $j=1,2$.

\section{Proof of Theorem \ref{main thm}: Fefferman--Stein type inequality on~$\widetilde M$}\label{Sec3}
\setcounter{equation}{0}

For $j=1,2$, let $M_j$ be the model domain as defined in Section \ref{Sec2}, with the vector fields $X_{j,1}$ and $X_{j,2}$ and the sub-Laplacian 
$\mathcal L_j$.

We aim to prove that 
there exist constants $C>0$ and $\beta>1$ such that for $f\in C_0^\infty(\widetilde M)$ and for all $\alpha>0$,
\begin{align}\label{good lambda group}
&|\{({\bf x}_1,{\bf x}_2)\in \widetilde M:\  S_{P}(f)({\bf x}_1,{\bf x}_2)>\alpha\}|\\
&\leq C\big|\big\{({\bf x}_1,{\bf x}_2)\in \widetilde M:\  \mathcal{N}_{P}^{\beta}(f)({\bf x}_1,{\bf x}_2)>\alpha\big\}\big|+\frac{C}{\alpha^{2}}\iint_{E_\beta(\alpha)}\mathcal{N}_{P}^{\beta}(f)({\bf x}_1,{\bf x}_2)^{2}d{\bf x}_{1}d{\bf x}_{2},\nonumber
\end{align}
where 
$$E_\beta(\alpha)=\big\{({\bf x}_1,{\bf x}_2)\in \widetilde M:\  \mathcal{N}_{P}^{\beta}(f)({\bf x}_1,{\bf x}_2)\leq\alpha\big\}.$$
We denote 
$$E_\beta(\alpha)^c= \widetilde M\backslash E_\beta(\alpha)=\big\{({\bf x}_1,{\bf x}_2)\in \widetilde M:\  \mathcal{N}_{P}^{\beta}(f)({\bf x}_1,{\bf x}_2)>\alpha\big\}.$$

For $j=1,2$, we note that for any $f\in L^{2}(M_j)$,  $u_{j}( {\bf x}_j, t_j):=e^{- t_j\sqrt{\mathcal L_j}}(f)( {\bf x}_j)$, $( {\bf x}_j, t_j)\in M_j\times \mathbb{R}_{+}$, is harmonic, in the sense that
\begin{align*}
\Delta_{ t_j, M_j}u_{j}( {\bf x}_j, t_j)=\mathcal L_ju_{j}( {\bf x}_j, t_j)-\partial_{ t_j}^{2} u_{j}( {\bf x}_j, t_j)=0,
\end{align*}
where $\Delta_{ t_j, M_j}:=\mathcal L_j-\partial_{ t_j}^{2}$ and we use the fact that 
$\partial_{ t_j}^{2} u_{j}( {\bf x}_j, t_j) = \partial_{ t_j}^{2} e^{- t_j\sqrt{\mathcal L_j}}(f)( {\bf x}_j) = \mathcal L_ju_{j}( {\bf x}_j, t_j)$. 
Consequently, for $j=1,2$,
for the gradient $\nabla_{ t_j, M_j}=(\partial_{ t_j},\nabla_{ M_j})=(\partial_{ t_j},X_{j,1},X_{j,2})$,
the following formula holds: for every $( {\bf x}_j, t_j)\in M_j\times \mathbb{R}_{+}$,
\begin{align}\label{harmonic}
2|\nabla_{ t_j, M_j}u_{j}( {\bf x}_j, t_j)|^{2}&=2|\nabla_{ t_j, M_j}u_{j}( {\bf x}_j, t_j)|^{2}-2u_{j}( {\bf x}_j, t_j) \Delta_{ t_j, M_j}u_{j}( {\bf x}_j, t_j)\\
&=-\Delta_{ t_j, M_j}\left(u_{j}^{2}( {\bf x}_j, t_j)\right).\nonumber
\end{align}

Next, for all $\alpha>0$ and $f\in L^{1}(\widetilde M)$ satisfying $\mathcal{N}_{P}^{\beta}(f)\in L^{1}(\widetilde M)$, define
\begin{align*}
A_{\beta}(\alpha):=\left\{( {\bf x}_{1},{\bf x}_{2})\in \widetilde M: \mathcal{M}_{S}(\chi_{E_\beta(\alpha)^c})( {\bf x}_{1},{\bf x}_{2})\leq\frac{1}{10 C_0}\right\},
\end{align*}
where $C_0$ is the constant in \eqref{P f M f prod}.

From the definition, it is direct to see that 
\begin{align*}
E_\beta(\alpha)^c\subset A_{\beta}(\alpha)^{c} =\widetilde M\backslash A_{\beta}(\alpha)
\end{align*}
and hence $A_{\beta}(\alpha)\subset E_\beta(\alpha)$.

Next,
from the  $L^{2}$-boundedness of the strong maximal function $\mathcal{M}_{S}$, we see that
\begin{align}\label{Abetac leq Ebetac}
|A_{\beta}(\alpha)^{c}|\leq C\big\|\mathcal{M}_{S}(\chi_{ E_\beta(\alpha)^c})\big\|_{L^2(\widetilde M)}^2\leq C|E_\beta(\alpha)^c|,
\end{align}
where the constant $C$ is independent of $\alpha$ and $\beta$.
Then we split
\begin{align}\label{good lambda}
&|\{({\bf x}_1,{\bf x}_2)\in \widetilde M:\  S_{P}(f)({\bf x}_1,{\bf x}_2)>\alpha\}|\\
&\leq |\{({\bf x}_1,{\bf x}_2)\in A_\beta(\alpha)^c:\  S_{P}(f)({\bf x}_1,{\bf x}_2)>\alpha\}|
+|\{({\bf x}_1,{\bf x}_2)\in A_\beta(\alpha):\  S_{P}(f)({\bf x}_1,{\bf x}_2)>\alpha\}|\nonumber\\
&\leq C\big|E_\beta(\alpha)^c\big|+\frac{1}{\alpha^{2}}\iint_{A_\beta(\alpha)} S_P(f)({\bf x}_1,{\bf x}_2)^{2}d{\bf x}_{1}d{\bf x}_{2},\nonumber
\end{align}
where the last inequality follows from \eqref{Abetac leq Ebetac} and from Chebyshev's inequality.  Now it suffices to estimate the second term in the right-hand side of last 
inequality above.

Let $$g( {\bf x}_{1},{\bf x}_{2}):=\chi_{E_\beta(\alpha)}({\bf x}_{1},{\bf x}_{2})\quad{\rm and}\quad W_{\beta}:=\bigcup_{({\bf x}_{1},{\bf x}_{2})\in A_{\beta}(\alpha)}\Gamma({\bf x}_{1},{\bf x}_{2}).$$

We first note that for $({\bf y}_{1},{\bf y}_{2})\in\widetilde M$ and $t_1,t_2>0$,
\begin{align*}
P_{t_1}^{[1]}P_{t_2}^{[2]}(g)( {\bf y}_{1},{\bf y}_{2})  
&=P_{t_1}^{[1]}P_{t_2}^{[2]}\big(1- \chi_{E_\beta(\alpha)^c} \big)( {\bf y}_{1},{\bf y}_{2})  \\
&=1- P_{t_1}^{[1]}P_{t_2}^{[2]}\big( \chi_{E_\beta(\alpha)^c} \big)( {\bf y}_{1},{\bf y}_{2}),
\end{align*}
where we use the fact that $P_{t_1}^{[1]}P_{t_2}^{[2]}(1)=1.$
Next, note that for $({\bf y}_{1},{\bf y}_{2},t_1,t_2)\in W_{\beta}$, there is $({\bf x}_{1},{\bf x}_{2})\in A_{\beta}(\alpha)$ such that 
$({\bf y}_{1},{\bf y}_{2},t_1,t_2)\in \Gamma({\bf x}_{1},{\bf x}_{2})$, and hence
$$P_{t_1}^{[1]}P_{t_2}^{[2]}\big( \chi_{E_\beta(\alpha)^c} \big)( {\bf y}_{1},{\bf y}_{2}) \leq C_0\mathcal{M}_{S}(\chi_{E_\beta(\alpha)^c })( {\bf x}_{1},{\bf x}_{2})< {1\over 10},$$
where the first inequality follows from \eqref{P f M f prod} and the last inequality follows from the fact that $({\bf x}_{1},{\bf x}_{2})\in A_{\beta}(\alpha)$.

Then we see that for every $({\bf y}_{1},{\bf y}_{2},t_1,t_2)\in W_{\beta}$, we obtain that 
\begin{align*}
P_{t_1}^{[1]}P_{t_2}^{[2]}(g)( {\bf y}_{1},{\bf y}_{2})  > {9\over 10}.
\end{align*}

Next, we claim that if $\beta$ is chosen sufficient large, then there is a constant $C_{1}\in(0,{9\over10})$, such that for any $({\bf y}_{1},{\bf y}_{2},t_{1},t_{2})\in \big(\widetilde{W}_{\beta}\big)^c:=\Big(\widetilde M\times[0,\infty)\times[0,\infty)\Big)\Big\backslash \widetilde{W}_{\beta}$,
\begin{align}\label{P g small}
P_{t_1}^{[1]}P_{t_2}^{[2]}(g)( {\bf y}_{1},{\bf y}_{2}) \leq C_1,
\end{align}
where 
$$\widetilde{W}_{\beta}:=\bigcup_{({\bf x}_1,{\bf x}_2)\in E_\beta(\alpha)}\Gamma^{\beta}({\bf x}_1,{\bf x}_2).$$
In fact, for every 
$({\bf y}_{1},{\bf y}_{2},t_{1},t_{2}) \in \big(\widetilde{W}_{\beta}\big)^c$, we see that for any $({\bf z}_{1},{\bf z}_{2})\in E_\beta(\alpha)$, we have either $d_{1}({\bf y}_1,{\bf z}_1)\geq \beta  t_1$, or $d_{2}({\bf y}_2,{\bf z}_2)\geq \beta  t_2$, or both. Hence,
 we have
\begin{align*}
0\leq P_{t_1}^{[1]}P_{t_2}^{[2]}(g)( {\bf y}_{1},{\bf y}_{2})
&=\iint_{\widetilde M}\chi_{E_\beta(\alpha)}({\bf z}_{1},{\bf z}_{2})P_{t_{1}}^{[1]}({\bf y}_{1},{\bf z}_{1})P_{t_{2}}^{[2]}({\bf y}_{2}, {\bf z}_{2})d{\bf z}_{1}d{\bf z}_{2}\\
&\leq \int_{d_{1}({\bf y}_1,{\bf z}_1)\geq \beta  t_1}P_{t_{1}}^{[1]}({\bf y}_{1},{\bf z}_{1})d{\bf z}_{1} \int_{M_2}P_{t_{2}}^{[2]}({\bf y}_{2}, {\bf z}_{2})d{\bf z}_{2}\\
&\qquad+ \int_{M_1}P_{t_{1}}^{[1]}({\bf y}_{1},{\bf z}_{1})d{\bf z}_{1} \int_{d_{2}({\bf y}_2,{\bf z}_2)\geq \beta  t_2}P_{t_{2}}^{[2]}({\bf y}_{2}, {\bf z}_{2})d{\bf z}_{2}\\
&\leq \int_{d_{1}({\bf y}_1,{\bf z}_1)\geq \beta  t_1}P_{t_{1}}^{[1]}({\bf y}_{1},{\bf z}_{1})d{\bf z}_{1} 
+ \int_{d_{2}({\bf y}_2,{\bf z}_2)\geq \beta  t_2}P_{t_{2}}^{[2]}({\bf y}_{2}, {\bf z}_{2})d{\bf z}_{2},
\end{align*}
where the last inequality follows from the conservation property \eqref{int P =1}.
To continue, by decomposing  $\{z_j\in M_j: d_{j}({\bf y}_j,{\bf z}_j)\geq \beta  t_j\}$, $j=1,2$,  into annuli and using the size estimate \eqref{P size},
we have
\begin{align*}
0\leq P_{t_1}^{[1]}P_{t_2}^{[2]}(g)( {\bf y}_{1},{\bf y}_{2})
\leq {C\over \beta}
\rightarrow 0,\ ({\rm as}\ \beta\rightarrow \infty),
\end{align*}
  where the constant $C$ depends on the constant $C_d$ and $m$ in \eqref{homogeneous dim} and on $C_P$ in \eqref{P size}.
Thus, there is some $\beta>1$ such that 
 our claim \eqref{P g small} holds.
 
 This also shows that if $P_{t_1}^{[1]}P_{t_2}^{[2]}(g)( {\bf y}_{1},{\bf y}_{2}) > C_1,$ then $({\bf y}_{1},{\bf y}_{2},t_{1},t_{2}) \in \widetilde{W}_{\beta}$.

To continue, we now choose a smooth cut-off function $\varphi(t)\in C^{\infty}(\mathbb{R})$ such that $\varphi(t)=1$ when $t\geq {9\over10}$ and $\varphi(t)=0$, when $t\leq C_{1}$. 

Besides, for simplicity, we  denote $v_{{\bf y}_{2},t_{2}}({\bf y}_{1}):=\nabla_{t_{2},M_{2}}P_{t_2}^{[2]}(f)( {\bf y}_{1},{\bf y}_{2})$. Then,
\begin{align}\label{a1}
&\iint_{A_{\beta}(\alpha)}S_{P}(f)( {\bf x}_{1},{\bf x}_{2})^2d{\bf x}_{1}d{\bf x}_{2}\nonumber\\
&=\iint_{A_{\beta}(\alpha)}\iint_{\Gamma({\bf x}_{1},{\bf x}_{2})}|t_{1}\nabla_{t_{1},M_{1}}P_{t_1}^{[1]}\ t_{2}\nabla_{t_{2},M_{2}}P_{t_2}^{[2]}(f)({\bf y}_{1},{\bf y}_{2})|^{2} \frac{d{\bf y}_{1}d{\bf y}_{2}dt_{1}dt_{2}}{t_{1}V_{t_1}({\bf x}_1)t_{2}V_{t_2}({\bf x}_2)}d{\bf x}_{1}d{\bf x}_{2}\nonumber\\
&=\iint_{A_{\beta}(\alpha)}\iint_{\Gamma({\bf x}_{1},{\bf x}_{2})} \left|t_{1}\nabla_{t_{1},M_{1}}P_{t_1}^{[1]}t_{2}v_{{\bf y}_{2},t_{2}}({\bf y}_{1})\right|^{2} \frac{d{\bf y}_{1}d{\bf y}_{2}dt_{1}dt_{2}}{t_{1}V_{t_1}({\bf x}_1)t_{2}V_{t_2}({\bf x}_2)}d{\bf x}_{1}d{\bf x}_{2}\nonumber\\
&\leq\iiiint_{W_{\beta}} \left|\nabla_{t_{1},M_{1}}P_{t_1}^{[1]}v_{{\bf y}_{2},t_{2}}({\bf y}_{1})\right|^{2}t_{1}t_{2}dt_{1}dt_{2}d{\bf y}_{1}d{\bf y}_{2}\nonumber\\
&\leq\iiiint_{\widetilde M\times \mathbb{R}_{+}\times\mathbb{R}_{+}} \left|\nabla_{t_{1},M_{1}} P_{t_1}^{[1]}v_{{\bf y}_{2},t_{2}}({\bf y}_{1})\right|^{2}\nonumber\\
&\hspace{4.3cm} \times \left|\varphi\Big(P_{t_1}^{[1]}P_{t_2}^{[2]}(g)( {\bf y}_{1},{\bf y}_{2}) \Big)\right|^{2}t_{1}t_{2}dt_{1}dt_{2}d{\bf y}_{1}d{\bf y}_{2}.
\end{align}
To continue, we note that  $P_{t_1}^{[1]}v_{{\bf y}_{2},t_{2}}({\bf y}_{1})$ as a function of $({\bf y}_{1},t_{1})$ is harmonic in $M_1\times\mathbb R_+$. Moreover, $P_{t_1}^{[1]}P_{t_2}^{[2]}(g)( {\bf y}_{1},{\bf y}_{2}) $ as a function of $({\bf y}_{1},t_{1})$ is also harmonic in $M_1\times\mathbb R_+$.
 Hence, by using \eqref{harmonic} the following equality holds:
\begin{align}\label{domination}
&\left|\nabla_{t_{1},M_{1}} P_{t_1}^{[1]}v_{{\bf y}_{2},t_{2}}({\bf y}_{1})\right|^{2} \left| \varphi\Big(P_{t_1}^{[1]}P_{t_2}^{[2]}(g)( {\bf y}_{1},{\bf y}_{2}) \Big)\right|^{2}\\[5pt]
&= -\frac{1}{2}\Delta_{t_{1},M_{1}}\left(\Big(P^{[1]}_{t_{1}} v_{{\bf y}_{2},t_{2}}({\bf y}_{1})\Big)^{2}\cdot \varphi\Big(P_{t_1}^{[1]}P_{t_2}^{[2]}(g)( {\bf y}_{1},{\bf y}_{2}) \Big)^{2}\right)\nonumber\\[5pt]
&\qquad-4P^{[1]}_{t_{1}} v_{{\bf y}_{2},t_{2}}({\bf y}_{1}) \nabla_{t_{1},M_{1}} P^{[1]}_{t_{1}} v_{{\bf y}_{2},t_{2}}({\bf y}_{1})\nonumber \\[5pt]
&\hspace{1.5cm} \times \varphi\Big(P_{t_1}^{[1]}P_{t_2}^{[2]}(g)( {\bf y}_{1},{\bf y}_{2}) \Big)\varphi^{\prime}\Big(P_{t_1}^{[1]}P_{t_2}^{[2]}(g)( {\bf y}_{1},{\bf y}_{2}) \Big) \nabla_{t_{1},M_{1}} P_{t_1}^{[1]}P_{t_2}^{[2]}(g)( {\bf y}_{1},{\bf y}_{2})\nonumber\\[5pt]
&\qquad-|P^{[1]}_{t_{1}} v_{{\bf y}_{2},t_{2}}({\bf y}_{1})|^{2}\varphi^{\prime}\Big( P_{t_1}^{[1]}P_{t_2}^{[2]}(g)( {\bf y}_{1},{\bf y}_{2}) \Big)^{2}\left|\nabla_{t_{1},M_{1}} P_{t_1}^{[1]}P_{t_2}^{[2]}(g)( {\bf y}_{1},{\bf y}_{2})\right|^{2}\nonumber\\[5pt]
&\qquad-|P^{[1]}_{t_{1}} v_{{\bf y}_{2},t_{2}}({\bf y}_{1})|^{2}\varphi\Big( P_{t_1}^{[1]}P_{t_2}^{[2]}(g)( {\bf y}_{1},{\bf y}_{2})\Big)\varphi^{\prime\prime}\Big(P_{t_1}^{[1]}P_{t_2}^{[2]}(g)( {\bf y}_{1},{\bf y}_{2})\Big)\left|\nabla_{t_{1},M_{1}} P_{t_1}^{[1]}P_{t_2}^{[2]}(g)( {\bf y}_{1},{\bf y}_{2})\right|^{2}\nonumber\\[5pt]
&=:f_{1}( {\bf y}_{1},{\bf y}_{2},t_{1},t_{2})+f_{2}( {\bf y}_{1},{\bf y}_{2},t_{1},t_{2})+f_{3}( {\bf y}_{1},{\bf y}_{2},t_{1},t_{2})+f_{4}( {\bf y}_{1},{\bf y}_{2},t_{1},t_{2}).\nonumber
\end{align}

We note that by Young's inequality, 
\begin{align*}
|f_{2}( {\bf y}_{1},{\bf y}_{2},t_{1},t_{2})|&\leq {1\over 10}\left|\nabla_{t_{1},M_{1}} P_{t_1}^{[1]}v_{{\bf y}_{2},t_{2}}({\bf y}_{1})\right|^{2} \left| \varphi\Big(P_{t_1}^{[1]}P_{t_2}^{[2]}(g)( {\bf y}_{1},{\bf y}_{2}) \Big)\right|^{2}\\
&\quad+40 |P^{[1]}_{t_{1}} v_{{\bf y}_{2},t_{2}}({\bf y}_{1})|^2 \left|\varphi^{\prime}\Big(P_{t_1}^{[1]}P_{t_2}^{[2]}(g)( {\bf y}_{1},{\bf y}_{2}) \Big)\right|^2 |\nabla_{t_{1},M_{1}} P_{t_1}^{[1]}P_{t_2}^{[2]}(g)( {\bf y}_{1},{\bf y}_{2})|^2\\
&=:f_{21}( {\bf y}_{1},{\bf y}_{2},t_{1},t_{2})+f_{22}( {\bf y}_{1},{\bf y}_{2},t_{1},t_{2}). 
\end{align*}

We can see that the integral 
$$\iiiint_{\widetilde M\times \mathbb{R}_{+}\times\mathbb{R}_{+}}  f_{21}( {\bf y}_{1},{\bf y}_{2},t_{1},t_{2}) t_{1}t_{2}dt_{1}dt_{2}d{\bf y}_{1}d{\bf y}_{2}.$$
 can be absorbed by the right-hand side of \eqref{a1}, while $f_{22}( {\bf y}_{1},{\bf y}_{2},t_{1},t_{2})$ is quite similar to $f_{3}( {\bf y}_{1},{\bf y}_{2},t_{1},t_{2})$ and $f_{4}( {\bf y}_{1},{\bf y}_{2},t_{1},t_{2})$.
Hence, we further have
\begin{align}\label{domination 2}
&f_{22}( {\bf y}_{1},{\bf y}_{2},t_{1},t_{2})+f_{3}( {\bf y}_{1},{\bf y}_{2},t_{1},t_{2})+f_{4}( {\bf y}_{1},{\bf y}_{2},t_{1},t_{2})\\
&\leq 40 |P^{[1]}_{t_{1}} v_{{\bf y}_{2},t_{2}}({\bf y}_{1})|^2 \left|\Phi'\Big(P_{t_1}^{[1]}P_{t_2}^{[2]}(g)( {\bf y}_{1},{\bf y}_{2}) \Big)\right|^2 \left|\nabla_{t_{1},M_{1}} P_{t_1}^{[1]}P_{t_2}^{[2]}(g)( {\bf y}_{1},{\bf y}_{2})\right|^2 \nonumber\\
&=: \mathfrak f_{2}( {\bf y}_{1},{\bf y}_{2},t_{1},t_{2}),\nonumber
\end{align}
where we choose $\Phi(t)$ such that 
\begin{align*}
\Phi'(t)=((\varphi^{\prime}(t))^{4}+(\varphi(t)\varphi^{\prime\prime}(t))^{2})^{1/4},\qquad {\rm and }\qquad \Phi(C_1)=0.
\end{align*}
Note that $\Phi'(t)\geq 0$ and $\Phi'(t)=0$ for $t\leq C_1$ or $t> \frac{9}{10}$. In addition, via assuming  $\Phi(C_1)=0$, we see that   $\Phi$ exhibits behavior similar to $\phi$. 

To continue,
note that the right-hand side of \eqref{a1} is bounded by 
\begin{align*}
&{10\over9}\left|\iiiint_{\widetilde M\times \mathbb{R}_{+}\times\mathbb{R}_{+}} f_{1}( {\bf y}_{1},{\bf y}_{2},t_{1},t_{2})t_{1}t_{2}dt_{1}dt_{2}d{\bf y}_{1}d{\bf y}_{2}\right|\\
&\qquad+{10\over9}\left|\iiiint_{\widetilde M\times \mathbb{R}_{+}\times\mathbb{R}_{+}}\mathfrak f_{2}( {\bf y}_{1},{\bf y}_{2},t_{1},t_{2})t_{1}t_{2}dt_{1}dt_{2}d{\bf y}_{1}d{\bf y}_{2}\right|\\
&=:{\rm I}_{1}+{\rm I}_{2}.
\end{align*}

For the term ${\rm I_{1}}$, integration by parts  yields that
\begin{align*}
&\left|\iint_{ M_1\times \mathbb{R}_{+}}f_{1}( {\bf y}_{1},{\bf y}_{2},t_{1},t_{2})t_{1}dt_{1}d{\bf y}_{1}\right|\\
&={1\over2}\left|\iint_{M_{1}\times\mathbb{R}_{+}} \mathcal L_1\left(P^{[1]}_{t_{1}}v_{{\bf y}_{2},t_{2}}({\bf y}_{1})^{2}\cdot \varphi(P_{t_1}^{[1]}P_{t_2}^{[2]}(g)( {\bf y}_{1},{\bf y}_{2}) )^{2}\right)t_{1}dt_{1}d{\bf y}_{1}\right.\\
&\qquad\left.-\iint_{M_{1}\times\mathbb{R}_{+}} \partial_{t_1}^2\left(P^{[1]}_{t_{1}}v_{{\bf y}_{2},t_{2}}({\bf y}_{1})^{2}\cdot \varphi(P_{t_1}^{[1]}P_{t_2}^{[2]}(g)( {\bf y}_{1},{\bf y}_{2}) )^{2}\right)t_{1}dt_{1}d{\bf y}_{1}\right|\\
&={1\over2}\left|\int_{\mathbb{R}_{+}} t_{1}\nabla_{ M_1}\left(P^{[1]}_{t_{1}}v_{{\bf y}_{2},t_{2}}({\bf y}_{1})^{2}\cdot \varphi(P_{t_1}^{[1]}P_{t_2}^{[2]}(g)( {\bf y}_{1},{\bf y}_{2}) )^{2}\right)\bigg|_{d_1({\bf 0},{\bf y}_1)=\infty}dt_{1}\right.\\
&\qquad\left.-\int_{M_{1}} \partial_{t_1}\left(P^{[1]}_{t_{1}}v_{{\bf y}_{2},t_{2}}({\bf y}_{1})^{2}\cdot \varphi(P_{t_1}^{[1]}P_{t_2}^{[2]}(g)( {\bf y}_{1},{\bf y}_{2}) )^{2}\right)t_{1}\bigg|_{t_1=0}^{t_1=\infty} d{\bf y}_{1}\right.\\
&\qquad\left.+\iint_{M_{1}\times\mathbb{R}_{+}} \partial_{t_1}\left(P^{[1]}_{t_{1}}v_{{\bf y}_{2},t_{2}}({\bf y}_{1})^{2}\cdot \varphi(P_{t_1}^{[1]}P_{t_2}^{[2]}(g)( {\bf y}_{1},{\bf y}_{2}) )^{2}\right)dt_{1}d{\bf y}_{1}\right|\\
&\leq {\rm I_{11}}+{\rm I_{12}}+{\rm I_{13}}.
\end{align*}
For $ {\rm I_{11}}$, note that 
\begin{align*}
&t_{1}\nabla_{ M_1}\left(P^{[1]}_{t_{1}}v_{{\bf y}_{2},t_{2}}({\bf y}_{1})^{2}\cdot \varphi(P_{t_1}^{[1]}P_{t_2}^{[2]}(g)( {\bf y}_{1},{\bf y}_{2}) )^{2}\right)\\
&= 2P^{[1]}_{t_{1}}v_{{\bf y}_{2},t_{2}}({\bf y}_{1}) t_{1}\nabla_{ M_1} P^{[1]}_{t_{1}}v_{{\bf y}_{2},t_{2}}({\bf y}_{1})\cdot \varphi(P_{t_1}^{[1]}P_{t_2}^{[2]}(g)( {\bf y}_{1},{\bf y}_{2}) )^{2} \\
&\qquad+ P^{[1]}_{t_{1}}v_{{\bf y}_{2},t_{2}}({\bf y}_{1})^{2}\cdot 2\varphi(P_{t_1}^{[1]}P_{t_2}^{[2]}(g)( {\bf y}_{1},{\bf y}_{2}) ) t_{1}\nabla_{ M_1} \varphi(P_{t_1}^{[1]}P_{t_2}^{[2]}(g)( {\bf y}_{1},{\bf y}_{2}) ). 
\end{align*}
The size condition of the Poisson kernel in \eqref{P size} yields that 
$$t_{1}\nabla_{ M_1}\left(P^{[1]}_{t_{1}}v_{{\bf y}_{2},t_{2}}({\bf y}_{1})^{2}\cdot \varphi(P_{t_1}^{[1]}P_{t_2}^{[2]}(g)( {\bf y}_{1},{\bf y}_{2}) )^{2}\right)\bigg|_{d_1({\bf 0},{\bf y}_1)=\infty}=0.$$
Thus, $ {\rm I_{11}}=0$.

For $ {\rm I_{12}}$, we have
\begin{align*}
&\partial_{t_1}\left(P^{[1]}_{t_{1}}v_{{\bf y}_{2},t_{2}}({\bf y}_{1})^{2}\cdot \varphi(P_{t_1}^{[1]}P_{t_2}^{[2]}(g)( {\bf y}_{1},{\bf y}_{2}) )^{2}\right)t_{1}\\
&=2P^{[1]}_{t_{1}}v_{{\bf y}_{2},t_{2}}({\bf y}_{1})t_1\partial_{t_1}P^{[1]}_{t_{1}}v_{{\bf y}_{2},t_{2}}({\bf y}_{1})\cdot \varphi(P_{t_1}^{[1]}P_{t_2}^{[2]}(g)( {\bf y}_{1},{\bf y}_{2}) )^{2}\\
&\quad+ P^{[1]}_{t_{1}}v_{{\bf y}_{2},t_{2}}({\bf y}_{1})^{2} \cdot 2\varphi(P_{t_1}^{[1]}P_{t_2}^{[2]}(g)( {\bf y}_{1},{\bf y}_{2}) ) \varphi'(P_{t_1}^{[1]}P_{t_2}^{[2]}(g)( {\bf y}_{1},{\bf y}_{2}) ) \,t_1\partial_{t_1}P_{t_1}^{[1]}P_{t_2}^{[2]}(g)( {\bf y}_{1},{\bf y}_{2}).
\end{align*}
For $t_1\to\infty$, from the decay of Poisson kernel in \eqref{P size} and the functional calculus we see that the kernel of $t_1\partial_{t_1}P^{[1]}_{t_{1}}$
also satisfies a similar size condition as in \eqref{P size}. Thus,  
we have that 
\begin{align}\label{approx 0}
\partial_{t_1}\left(P^{[1]}_{t_{1}}v_{{\bf y}_{2},t_{2}}({\bf y}_{1})^{2}\cdot \varphi(P_{t_1}^{[1]}P_{t_2}^{[2]}(g)( {\bf y}_{1},{\bf y}_{2}) )^{2}\right)t_{1}\to 0.
\end{align}
On the other hand, when $t_1\to0^+$, we see that the term $t_1\partial_{t_1}P^{[1]}_{t_{1}}v_{{\bf y}_{2},t_{2}}({\bf y}_{1})\to 0$, since 
$t_1\partial_{t_1}P^{[1]}_{t_{1}}$ has integration zero. Thus, \eqref{approx 0} also holds when $t_1\to0^+$. This gives that $ {\rm I_{12}}=0$.

For $ {\rm I_{13}}$, we have
\begin{align*}
{\rm I_{13}}
&=\left|\int_{M_{1}}\left(|P^{[1]}_{t_{1}}v_{{\bf y}_{2},t_{2}}({\bf y}_{1})|^{2}\cdot \varphi(P_{t_1}^{[1]}P_{t_2}^{[2]}(g)( {\bf y}_{1},{\bf y}_{2}) )^{2}\right)\bigg|_{t_1=0}^{t_1=\infty} d{\bf y}_{1}\right|\\
&\leq \int_{M_{1}}|v_{{\bf y}_{2},t_{2}}({\bf y}_{1})|^{2}|\varphi(P_{t_{2}}^{[2]}(g)( {\bf y}_{1},{\bf y}_{2}))|^{2}d{\bf y}_{1},
\end{align*}
where the last inequality follows from the fact that $|P^{[1]}_{t_{1}}v_{{\bf y}_{2},t_{2}}({\bf y}_{1})|^{2}\cdot \varphi(P_{t_1}^{[1]}P_{t_2}^{[2]}(g)( {\bf y}_{1},{\bf y}_{2}) )^{2}\to0$ as $t_1\to\infty$ and that $P^{[1]}_{t_{1}} \to $ identity as $t_1\to 0^+$.

Therefore,
\begin{align}\label{I1 further}
{\rm I_{1}}&\leq {5\over9}\iiint_{\widetilde M\times \mathbb{R}_{+}}|v_{{\bf y}_{2},t_{2}}({\bf y}_{1})|^{2}|\varphi(P_{t_{2}}^{[2]}(g)( {\bf y}_{1},{\bf y}_{2}))|^{2}t_{2}dt_{2}d{\bf y}_{2}d{\bf y}_{1}\\
&= {5\over9}\int_{M_1}\iint_{M_2\times \mathbb{R}_{+}}|\nabla_{t_{2},M_{2}}P_{t_2}^{[2]}(f)( {\bf y}_{1},{\bf y}_{2})|^{2}|\varphi(P_{t_{2}}^{[2]}(g)( {\bf y}_{1},{\bf y}_{2}))|^{2}t_{2}dt_{2}d{\bf y}_{2}\ \ d{\bf y}_{1}.\nonumber
\end{align}
Using the same argument as in \eqref{domination}, we have
\begin{align}\label{domination for M2}
&\left|\nabla_{t_{2},M_{2}}P_{t_2}^{[2]}(f)( {\bf y}_{1},{\bf y}_{2})\right|^{2} \left| \varphi\Big(P_{t_2}^{[2]}(g)( {\bf y}_{1},{\bf y}_{2}) \Big)\right|^{2}\\[5pt]
&=- \frac{1}{2}\Delta_{t_{2},M_{2}}\left(P^{[2]}_{t_{2}} (f)( {\bf y}_{1},{\bf y}_{2})^{2}\cdot \varphi\Big(P_{t_2}^{[2]}(g)( {\bf y}_{1},{\bf y}_{2}) \Big)^{2}\right)\nonumber\\[5pt]
&\qquad-4 P^{[2]}_{t_{2}} (f)( {\bf y}_{1},{\bf y}_{2}) \nabla_{t_{2},M_{2}} P^{[2]}_{t_{2}} (f)( {\bf y}_{1},{\bf y}_{2})\nonumber \\[5pt]
&\hspace{1.5cm} \times \varphi\Big(P_{t_2}^{[2]}(g)( {\bf y}_{1},{\bf y}_{2}) \Big)\varphi^{\prime}\Big(P_{t_2}^{[2]}(g)( {\bf y}_{1},{\bf y}_{2}) \Big) \nabla_{t_{2},M_{2}} P_{t_2}^{[2]}(g)( {\bf y}_{1},{\bf y}_{2})\nonumber\\[5pt]
&\qquad-|P^{[2]}_{t_{2}} (f)( {\bf y}_{1},{\bf y}_{2})|^{2}\varphi^{\prime}\Big( P_{t_2}^{[2]}(g)( {\bf y}_{1},{\bf y}_{2}) \Big)^{2}\left|\nabla_{t_{2},M_{2}} P_{t_2}^{[2]}(g)( {\bf y}_{1},{\bf y}_{2})\right|^{2}\nonumber\\[5pt]
&\qquad-|P^{[2]}_{t_{2}} (f)( {\bf y}_{1},{\bf y}_{2})|^{2}\varphi\Big( P_{t_2}^{[2]}(g)( {\bf y}_{1},{\bf y}_{2})\Big)\varphi^{\prime\prime}\Big(P_{t_2}^{[2]}(g)( {\bf y}_{1},{\bf y}_{2})\Big)\left|\nabla_{t_{2},M_{2}} P_{t_2}^{[2]}(g)( {\bf y}_{1},{\bf y}_{2})\right|^{2}\nonumber\\[5pt]
&=:h_{1}( {\bf y}_{1},{\bf y}_{2},t_{2})+h_{2}( {\bf y}_{1},{\bf y}_{2},t_{2})+h_{3}( {\bf y}_{1},{\bf y}_{2},t_{2})+h_{4}( {\bf y}_{1},{\bf y}_{2},t_{2}).\nonumber
\end{align}
Then, by handling $h_2$ using the same estimate as we did for $f_2$, that is, we dominate it by 
\begin{align*}
&{1\over 10} \left|\nabla_{t_{2},M_{2}}P_{t_2}^{[2]}(f)( {\bf y}_{1},{\bf y}_{2})\right|^{2} \left| \varphi\Big(P_{t_2}^{[2]}(g)( {\bf y}_{1},{\bf y}_{2}) \Big)\right|^{2}\\
&\qquad+40\left|P^{[2]}_{t_{2}} (f)( {\bf y}_{1},{\bf y}_{2})\right|^2\left|\varphi^{\prime}\Big(P_{t_2}^{[2]}(g)( {\bf y}_{1},{\bf y}_{2}) \Big) \right|^2\left|\nabla_{t_{2},M_{2}} P_{t_2}^{[2]}(g)( {\bf y}_{1},{\bf y}_{2})\right|^2\\
&=:h_{21}( {\bf y}_{1},{\bf y}_{2},t_{2}) +h_{22}( {\bf y}_{1},{\bf y}_{2},t_{2}).
\end{align*}
Again, we see that 
$${5\over9}\int_{M_1}\iint_{M_2\times \mathbb{R}_{+}} h_{21}( {\bf y}_{1},{\bf y}_{2},t_{2}) t_{2}dt_{2}d{\bf y}_{2}\ d{\bf y}_{1}$$
can be absorbed by the right-hand side of \eqref{I1 further},
and we further have
\begin{align}\label{domination for M2 part2}
&
h_{22}( {\bf y}_{1},{\bf y}_{2},t_{2})+h_{3}( {\bf y}_{1},{\bf y}_{2},t_{2})+h_{4}( {\bf y}_{1},{\bf y}_{2},t_{2})\\
&\leq 40 |P_{t_2}^{[2]}(f)( {\bf y}_{1},{\bf y}_{2})|^2 \left|\Psi\Big(P_{t_2}^{[2]}(g)( {\bf y}_{1},{\bf y}_{2}) \Big)\right|^2 \left|\nabla_{t_{2},M_{2}} P_{t_2}^{[2]}(g)( {\bf y}_{1},{\bf y}_{2})\right|^2 \nonumber\\
&=: \mathfrak h_{2}( {\bf y}_{1},{\bf y}_{2},t_{2}),\nonumber
\end{align}
where we choose 
\begin{align*}
\Psi(t)=((\varphi^{\prime}(t))^{4}+(\varphi(t)\varphi^{\prime\prime}(t))^{2})^{1/4}.
\end{align*}

Then we further have
\begin{align*}
{\rm I_{1}}&\leq C\int_{M_1}\iint_{M_2\times \mathbb{R}_{+}}  h_{1}( {\bf y}_{1},{\bf y}_{2},t_{2}) t_{2}dt_{2}d{\bf y}_{2}\ \ d{\bf y}_{1}\\
&\qquad +C\int_{M_1}\iint_{M_2\times \mathbb{R}_{+}} \mathfrak h_{2}( {\bf y}_{1},{\bf y}_{2},t_{2}) t_{2}dt_{2}d{\bf y}_{2}\ \ d{\bf y}_{1}\\
&=:\widetilde {\rm I_{11}}+\widetilde {\rm I_{12}}
\end{align*}
with an  positive absolute constant $C$.
By repeating a similar integration by parts as in the estimates for ${\rm I}_1$, we have
\begin{align*}
\widetilde {\rm I_{11}}
&\leq C\iint_{\widetilde M}f( {\bf y}_{1},{\bf y}_{2})^{2}\varphi(g( {\bf y}_{1},{\bf y}_{2}))^{2}d{\bf y}_{1}d{\bf y}_{2}.
\end{align*}

It follows from the definition of the non-tangential maximal function $\mathcal{N}_{P}^{\beta}(f)$ that $f( {\bf y}_{1},{\bf y}_{2})\leq \mathcal{N}_{P}^{\beta}(f)( {\bf y}_{1},{\bf y}_{2})$. Besides, from the definition of the functions $g$ and $\varphi$, we see that for $( {\bf y}_{1},{\bf y}_{2})$ with $\varphi(g( {\bf y}_{1},{\bf y}_{2}))\neq 0$, we have that $g( {\bf y}_{1},{\bf y}_{2})>C_1$, which shows that $( {\bf y}_{1},{\bf y}_{2})\in E_\beta(\alpha)$. Hence, $\mathcal{N}_{P}^{\beta}(f)( {\bf y}_{1},{\bf y}_{2})\leq \alpha$. Thus,
\begin{align*}
\widetilde {\rm I_{11}}
&\leq C\iint_{E_\beta(\alpha)}\mathcal{N}_{P}^{\beta}(f)( {\bf y}_{1},{\bf y}_{2})^{2}d{\bf y}_{1}d{\bf y}_{2}.
\end{align*}

For $\widetilde {\rm I_{12}}$, again, 
from the definition of the functions $g$ and $\Psi$ (which inherits the support condition from $\varphi$), we see that for $( {\bf y}_{1},{\bf y}_{2},t_2)$ with $\Psi(P_{t_{2}}^{[2]}g( {\bf y}_{1},{\bf y}_{2}))\neq 0$, we have that $P_{t_{2}}^{[2]}g( {\bf y}_{1},{\bf y}_{2})>C_1$.
From
\eqref{P g small} we see that $( {\bf y}_{1},{\bf y}_{2},0,t_2)\in  \widetilde{W}_{\beta}$.
Hence, there exists ${\bf z}_2$ such that $({\bf y}_{1},{\bf z}_{2})\in E_\beta(\alpha)$ and that 
$|P_{t_2}^{[2]}(f)( {\bf y}_{1},{\bf y}_{2})|\leq \mathcal{N}_{P}^{\beta}(f)( {\bf y}_{1},{\bf z}_{2})\leq \alpha$.
Thus,
\begin{align*}
\widetilde {\rm I_{12}}&\leq C\iiint_{\widetilde M\times \mathbb{R}_{+}} |P_{t_2}^{[2]}(f)( {\bf y}_{1},{\bf y}_{2})|^2 \left|\Psi\Big(P_{t_2}^{[2]}(g)( {\bf y}_{1},{\bf y}_{2}) \Big)\right|^2 \left|\nabla_{t_{2},M_{2}} P_{t_2}^{[2]}(g)( {\bf y}_{1},{\bf y}_{2})\right|^2 t_{2}dt_{2}d{\bf y}_{2}\ \ d{\bf y}_{1}\\
&\leq C\alpha^2\iiint_{\widetilde M\times \mathbb{R}_{+}}   \left|\nabla_{t_{2},M_{2}} P_{t_2}^{[2]}(g)( {\bf y}_{1},{\bf y}_{2})\right|^2 t_{2}dt_{2}d{\bf y}_{2}\ \ d{\bf y}_{1}\\
&= C\alpha^2\iiint_{\widetilde M\times \mathbb{R}_{+}}   \left|t_{2}\nabla_{t_{2},M_{2}} P_{t_2}^{[2]}(g)( {\bf y}_{1},{\bf y}_{2})\right|^2 {dt_{2}\over t_{2}}d{\bf y}_{2}\ \ d{\bf y}_{1}\\
&= C\alpha^2\iiint_{\widetilde M\times \mathbb{R}_{+}}   \left|t_{2}\nabla_{t_{2},M_{2}} P_{t_2}^{[2]}(1-g)( {\bf y}_{1},{\bf y}_{2})\right|^2 {dt_{2}\over t_{2}}d{\bf y}_{2}\ \ d{\bf y}_{1},
\end{align*}
where the last equality follows from the fact that the kernel of $t_{2}\nabla_{t_{2},M_{2}} P_{t_2}^{[2]}$ has integration zero.
Therefore, by using the Littlewood--Paley estimate, we obtain that
\begin{align*}
\widetilde {\rm I_{12}}&\leq C\alpha^2\|1-g\|_{L^2(\widetilde M)}^2 = C\alpha^2\left|E_\beta(\alpha)^c\right|.
\end{align*}
This finishes the estimate of the term ${\rm I_{1}}$.  We now term to ${\rm I_{2}}$.
By noting that 
\begin{align*}
\nabla_{t_{1},M_{1}}\Phi\Big(P_{t_1}^{[1]}P_{t_2}^{[2]}(g)( {\bf y}_{1},{\bf y}_{2}) \Big)= \Phi'\Big(P_{t_1}^{[1]}P_{t_2}^{[2]}(g)( {\bf y}_{1},{\bf y}_{2}) \Big) \nabla_{t_{1},M_{1}} P_{t_1}^{[1]}P_{t_2}^{[2]}(g)( {\bf y}_{1},{\bf y}_{2}),
\end{align*}
we have
\begin{align}\label{a2}
{\rm I_{2}}&= {400\over 9}\iiiint_{\widetilde M\times \mathbb{R}_{+}\times\mathbb{R}_{+}} \left| \nabla_{t_{2},M_{2}}P_{t_2}^{[2]}P^{[1]}_{t_{1}}(f)( {\bf y}_{1},{\bf y}_{2})\right|^2\\ 
&\qquad\qquad\times \left|\nabla_{t_{1},M_{1}} \Phi\Big(P_{t_1}^{[1]}P_{t_2}^{[2]}(g)( {\bf y}_{1},{\bf y}_{2})\Big)\right|^2t_{1}t_{2}dt_{1}dt_{2}d{\bf y}_{1}d{\bf y}_{2}.\nonumber
\end{align}
Observe that
\begin{align*}
&|\nabla_{t_{2},M_{2}}P_{t_{1}}^{[1]}P_{t_{2}}^{[2]}(f)( {\bf y}_1,{\bf y}_2)|^{2}|\nabla_{t_{1},M_{1}}\Phi(P_{t_{1}}^{[1]}P_{t_{2}}^{[2]}(g)( {\bf y}_1,{\bf y}_2))|^{2}\\[5pt]
&=-\frac{1}{2}\Delta_{t_{2},M_{2}}\left(P_{t_{1}}^{[1]} P_{t_{2}}^{[2]} (f)( {\bf y}_1,{\bf y}_2)^{2}|\nabla_{t_{1},M_{1}}\Phi(P_{t_{1}}^{[1]}P_{t_{2}}^{[2]}(g)( {\bf y}_1,{\bf y}_2))|^{2}\right)\\[5pt]
&\qquad-4P_{t_{1}}^{[1]} P_{t_{2}}^{[2]}(f)( {\bf y}_1,{\bf y}_2) \nabla_{t_{2},M_{2}}P_{t_{1}}^{[1]} P_{t_{2}}^{[2]}(f)( {\bf y}_1,{\bf y}_2)\\[5pt]
&\hskip2cm\times\nabla_{t_{1},M_{1}}\Phi( P_{t_{1}}^{[1]} P_{t_{2}}^{[2]}(g)( {\bf y}_1,{\bf y}_2))\nabla_{t_{2},M_{2}}\nabla_{t_{1},M_{1}}\Phi(P_{t_{1}}^{[1]} P_{t_{2}}^{[2]}(g)( {\bf y}_1,{\bf y}_2))\\[5pt]
&\qquad-|P_{t_{1}}^{[1]}P_{t_{2}}^{[2]}f( {\bf y}_1,{\bf y}_2)|^{2}|\nabla_{t_{2},M_{2}}\nabla_{t_{1},M_{1}}\Phi(P_{t_{1}}^{[1]} P_{t_{2}}^{[2]}(g)( {\bf y}_1,{\bf y}_2))|^{2}\\[5pt]
&\qquad-|P_{t_{1}}^{[1]} P_{t_{2}}^{[2]}(f)( {\bf y}_1,{\bf y}_2)|^{2}\\[5pt]
&\hskip2cm\times\nabla_{t_{1},M_{1}}\Phi(P_{t_{1}}^{[1]} P_{t_{2}}^{[2]}(g)( {\bf y}_1,{\bf y}_2))\nabla_{t_{2},M_{2}}\nabla_{t_{2},M_{2}}\nabla_{t_{1},M_{1}}\Phi(P_{t_{1}}^{[1]} P_{t_{2}}^{[2]}(g)( {\bf y}_1,{\bf y}_2))\\[5pt]
&=:\mathfrak F_{1}( {\bf y}_1,{\bf_y}_2,t_{1},t_{2})+\mathfrak F_{2}( {\bf y}_1,{\bf y}_2,t_{1},t_{2})+\mathfrak F_{3}( {\bf y}_1,{\bf y}_2,t_{1},t_{2})+\mathfrak F_{4}( {\bf y}_1,{\bf y}_2,t_{1},t_{2}).
\end{align*}
Thus, the right-hand side of \eqref{a2} is bounded by ${\rm II_{21}+II_{22}+II_{23}+II_{24}}$, where
\begin{align*}
{\rm II}_{2j}:=C\left|\iiiint_{\widetilde M\times \mathbb{R}_{+}\times\mathbb{R}_{+}}\mathfrak F_{j}( {\bf y}_1,{\bf y}_2,t_{1},t_{2})t_{1}t_{2}dt_{1}dt_{2}d{\bf y}_{1}d{\bf y}_{2}\right|,\quad j=1,2,3,4.
\end{align*}

To estimate the term ${\rm II_{21}}$, we first let $\Phi_{1}(t)$ be a smooth function on $\mathbb{R}$ such that $$\Phi_{1}^{\prime}(t)=(\Phi'(t)^{4}+(\Phi(t)\Phi^{\prime\prime}(t))^{2})^{\frac{1}{4}}.$$
Before we move on, note that 
again, 
from the definition of the functions $g$ and $\Phi_{1}^{\prime}$  (which inherits the support condition from $\Phi$), we see that for $( {\bf y}_{1},{\bf y}_{2},t_1)$ with $\Phi'_1(P_{t_{1}}^{[1]}g( {\bf y}_{1},{\bf y}_{2}))\neq 0$, we have that $P_{t_{1}}^{[1]}g( {\bf y}_{1},{\bf y}_{2})>C_1$.
From
\eqref{P g small} we see that $( {\bf y}_{1},{\bf y}_{2},t_1,0)\in  \widetilde{W}_{\beta}$.
Hence, there exists ${\bf z}_1$ such that $({\bf z}_{1},{\bf y}_{2})\in E_\beta(\alpha)$ and that 
$|P_{t_1}^{[1]}(f)( {\bf y}_{1},{\bf y}_{2})|\leq \mathcal{N}_{P}^{\beta}(f)( {\bf z}_{1},{\bf y}_{2})\leq \alpha$.

Next, by repeating a similar integration by parts, we have
\begin{align*}
{\rm II_{21}}
&\leq C\iint_{\widetilde M}f( {\bf y}_{1},{\bf y}_{2})^{2}\Phi(g( {\bf y}_{1},{\bf y}_{2}))^{2}d{\bf y}_{1}d{\bf y}_{2}\\
&\quad+ C\iiint_{\widetilde M\times \mathbb{R}_{+}}P_{t_{1}}^{[1]}(f)( {\bf y}_1,{\bf y}_2)^{2}|\nabla_{t_{1},M_{1}}\Phi_{1}(P_{t_{1}}^{[1]}(g)( {\bf y}_1,{\bf y}_2))|^{2}t_{1}dt_{1}d{\bf y}_{1}d{\bf y}_{2}\\
&\leq C\iint_{E_\beta(\alpha)}\mathcal{N}_{P}^{\beta}(f)( {\bf y}_{1},{\bf y}_{2})^{2}d{\bf y}_{1}d{\bf y}_{2}\\
&\quad+ C\alpha^{2}\iiint_{\widetilde M\times \mathbb{R}_{+}}|\nabla_{t_{1},M_{1}}(P_{t_{1}}^{[1]}(g)( {\bf y}_1,{\bf y}_2))|^{2}t_{1}dt_{1}d{\bf y}_{1}d{\bf y}_{2}\\
&= C\iint_{E_\beta(\alpha)}\mathcal{N}_{P}^{\beta}(f)( {\bf y}_{1},{\bf y}_{2})^{2}d{\bf y}_{1}d{\bf y}_{2}\\
&\quad+ C\alpha^{2}\iiint_{\widetilde M\times \mathbb{R}_{+}}|(t_{1}\nabla_{t_{1},M_{1}}P_{t_{1}}^{[1]}(1-g)( {\bf y}_1,{\bf y}_2))|^{2}\frac{dt_{1}d{\bf y}_{1}d{\bf y}_{2}}{t_{1}}\\
&\leq C\iint_{E_\beta(\alpha)}\mathcal{N}_{P}^{\beta}(f)( {\bf y}_{1},{\bf y}_{2})^{2}d{\bf y}_{1}d{\bf y}_{2}+
C\alpha^{2}\|1-g\|_{L^{2}(\widetilde M)}^{2}\\
&=C\iint_{E_\beta(\alpha)}\mathcal{N}_{P}^{\beta}(f)( {\bf y}_{1},{\bf y}_{2})^{2}d{\bf y}_{1}d{\bf y}_{2}+C\alpha^{2}|E_\beta(\alpha)^c|,
\end{align*}
where in the second inequality we used the chain rule
\begin{align*}
\nabla_{t_{1},M_{1}}\Phi_{1}(P_{t_{1}}^{[1]}(g)( {\bf y}_1,{\bf y}_2))=\Phi_{1}^{\prime}(P_{t_{1}}^{[1]}(g)( {\bf y}_1,{\bf y}_2))\nabla_{t_{1},M_{1}}(P_{t_{1}}^{[1]}(g)( {\bf y}_1,{\bf y}_2))
\end{align*}
and the inequality $|\Phi_{1}^{\prime}(P_{t_{1}}^{[1]}(g)( {\bf y}_1,{\bf y}_2))|\leq C$ as well as the support condition on $\Phi_{1}^{\prime}$.

For the term ${\rm II_{22}}$, we apply Young's inequality to see that
\begin{align*}
{\rm II_{22}}&\leq \frac{1}{10}\iiiint_{\widetilde M\times \mathbb{R}_{+}\times\mathbb{R}_{+}}|\nabla_{t_{2},M_{2}}P_{t_{1}}^{[1]} P_{t_{2}}^{[2]}(f)( {\bf y}_1,{\bf y}_2)|^{2}\\
&\hspace{3.9cm} \times |\nabla_{t_{1},M_{1}}\Phi(P_{t_{1}}^{[1]} P_{t_{2}}^{[2]}(g)( {\bf y}_1,{\bf y}_2))|^{2}t_{1}t_{2}dt_{1}dt_{2}d{\bf y}_{1}d{\bf y}_{2}\\
&\qquad+C\iiiint_{\widetilde M\times \mathbb{R}_{+}\times\mathbb{R}_{+}}|P_{t_{1}}^{[1]} P_{t_{2}}^{[2]}(f)( {\bf y}_1,{\bf y}_2)|^{2}\\
&\hspace{3.9cm} \times |\nabla_{t_{2},M_{2}}\nabla_{t_{1},M_{1}}\Phi( P_{t_{1}}^{[1]} P_{t_{2}}^{[2]}(g)( {\bf y}_1,{\bf y}_2))|^{2}t_{1}t_{2}dt_{1}dt_{2}d{\bf y}_{1}d{\bf y}_{2}\\
&=:{\rm II_{221}}+{\rm II_{222}}.
\end{align*}
Since ${\rm II_{221}}$ can be absorded by ${\rm I}_{2}$, it suffices to estimate the term ${\rm II_{222}}$. By the chain rule,
\begin{align*}
&\nabla_{t_{2},M_{2}}\nabla_{t_{1},M_{1}}\Phi(P_{t_{1}}^{[1]} P_{t_{2}}^{[2]}(g)( {\bf y}_1,{\bf y}_2))\\
&=\Phi^{\prime\prime}(P_{t_{1}}^{[1]} P_{t_{2}}^{[2]}(g)( {\bf y}_1,{\bf y}_2))\nabla_{t_{2},M_{2}}P_{t_{1}}^{[1]} P_{t_{2}}^{[2]}(g)( {\bf y}_1,{\bf y}_2)\nabla_{t_{1},M_{1}} P_{t_{1}}^{[1]} P_{t_{2}}^{[2]}(g)( {\bf y}_1,{\bf y}_2)\\
&\qquad +\Phi^{\prime}(P_{t_{1}}^{[1]} P_{t_{2}}^{[2]}(g)( {\bf y}_1,{\bf y}_2))\nabla_{t_{2},M_{2}}\nabla_{t_{1},M_{1}}P_{t_{1}}^{[1]} P_{t_{2}}^{[2]}(g)( {\bf y}_1,{\bf y}_2).
\end{align*}
Then ${\rm II_{222}}$ can be further bounded by ${\rm II_{2221}}$ and ${\rm II_{2222}}$ with the above two integrands respectively.

Denote $\mathcal M_{1}$ and $\mathcal M_{2}$ be the Hardy--Littlewood maximal functions on $M_{1}$ and $M_{2}$, respectively. Then by the support property of $\Phi^{\prime\prime}$,
\begin{align*}
{\rm II_{2221}}&= \iiiint_{\widetilde M\times \mathbb{R}_{+}\times\mathbb{R}_{+}}|P_{t_{1}}^{[1]}P_{t_{2}}^{[2]}(f)( {\bf y}_1,{\bf y}_2)|^{2}|\Phi^{\prime\prime}(P_{t_{1}}^{[1]}P_{t_{2}}^{[2]}(g)( {\bf y}_1,{\bf y}_2))|^{2}\\[5pt]
& \hspace{0.8cm}\times |\nabla_{t_{2},M_{2}} P_{t_{1}}^{[1]}P_{t_{2}}^{[2]}(g)( {\bf y}_1,{\bf y}_2)|^{2}|\nabla_{t_{1},M_{1}} P_{t_{1}}^{[1]}P_{t_{2}}^{[2]}(g)( {\bf y}_1,{\bf y}_2)|^{2}t_{1}t_{2}dt_{1}dt_{2}d{\bf y}_{1}d{\bf y}_{2}\\[5pt]
&\leq C\alpha^{2} \iiiint_{\widetilde M\times \mathbb{R}_{+}\times\mathbb{R}_{+}}|\nabla_{t_{2},M_{2}} P_{t_{1}}^{[1]}P_{t_{2}}^{[2]}(g)( {\bf y}_1,{\bf y}_2)|^{2}\\[5pt]
& \hspace{5.2cm}\times |\nabla_{t_{1},M_{1}} P_{t_{1}}^{[1]}P_{t_{2}}^{[2]}(g)( {\bf y}_1,{\bf y}_2)|^{2}t_{1}t_{2}dt_{1}dt_{2}d{\bf y}_{1}d{\bf y}_{2}\\[5pt]
&\leq C\alpha^{2} \iiiint_{\widetilde M\times \mathbb{R}_{+}\times\mathbb{R}_{+}}\left|\mathcal M_{1}\left(\big|t_{2}\nabla_{t_{2},M_{2}}P^{[2]}_{t_{2}}(g)\big|\right)( {\bf y}_1,{\bf y}_2)\right|^{2}\\[5pt]
& \hspace{5.2cm}\times \left|\mathcal M_{2}\left(\big|t_{1}\nabla_{t_{1},M_{1}}P^{[1]}_{t_{1}}(g)\big|\right)( {\bf y}_1,{\bf y}_2)\right|^{2}\frac{dt_{1}dt_{2}d{\bf y}_{1}d{\bf y}_{2}}{t_{1}t_{2}}.
\end{align*}
Applying H\"{o}lder's inequality, we further have
\begin{align*}
{\rm II_{2221}}
&\leq C\alpha^{2} \left(\iint_{\widetilde M}\left(\int_{\mathbb{R}_{+}}\left|\mathcal M_{1}\left(\big|t_{2}\nabla_{t_{2},M_{2}}P^{[2]}_{t_{2}}(g)\big|\right)( {\bf y}_1,{\bf y}_2)\right|^{2}\frac{dt_{2}}{t_{2}}\right)^{2}d{\bf y}_{1}d{\bf y}_{2}\right)^{1/2}\\
& \hspace{2.5cm}\times \left(\iint_{\widetilde M}\left(\int_{\mathbb{R}_{+}}\left|\mathcal M_{2}\left(\big|t_{1}\nabla_{t_{1},M_{1}}P^{[1]}_{t_{1}}(g)\big|\right)( {\bf y}_1,{\bf y}_2)\right|^{2}\frac{dt_{1}}{t_{1}}\right)^{2}d{\bf y}_{1}d{\bf y}_{2}\right)^{1/2}\\
&\leq C\alpha^{2} \left(\iint_{\widetilde M}\left(\int_{\mathbb{R}_{+}}\left|t_{2}\nabla_{t_{2},M_{2}}P^{[2]}_{t_{2}}(g)( {\bf y}_1,{\bf y}_2)\right|^{2}\frac{dt_{2}}{t_{2}}\right)^{2}d{\bf y}_{1}d{\bf y}_{2}\right)^{1/2}\\
& \hspace{2.5cm}\times \left(\iint_{\widetilde M}\left(\int_{\mathbb{R}_{+}}\left|t_{1}\nabla_{t_{1},M_{1}}P^{[1]}_{t_{1}}(g)( {\bf y}_1,{\bf y}_2)\right|^{2}\frac{dt_{1}}{t_{1}}\right)^{2}d{\bf y}_{1}d{\bf y}_{2}\right)^{1/2}\\
&= C\alpha^{2} \left(\iint_{\widetilde M}\left(\int_{\mathbb{R}_{+}}\left|t_{2}\nabla_{t_{2},M_{2}}P^{[2]}_{t_{2}}(1-g)( {\bf y}_1,{\bf y}_2)\right|^{2}\frac{dt_{2}}{t_{2}}\right)^{2}d{\bf y}_{1}d{\bf y}_{2}\right)^{1/2}\\
& \hspace{2.5cm}\times \left(\iint_{\widetilde M}\left(\int_{\mathbb{R}_{+}}\left|t_{1}\nabla_{t_{1},M_{1}}P^{[1]}_{t_{1}}(1-g)( {\bf y}_1,{\bf y}_2)\right|^{2}\frac{dt_{1}}{t_{1}}\right)^{2}d{\bf y}_{1}d{\bf y}_{2}\right)^{1/2}\\
&\leq C\alpha^{2} \|1-g\|_{L^{2}(\widetilde M)}^{2}\\
&=C\alpha^{2}\left|E_\beta(\alpha)^c\right|,
\end{align*}
where the second inequality we used the vector-valued inequality for the Hardy--Littlewood maximal functions and the last inequality we applied the Littlewood--Paley theory. Next it follows from the support condition on $\Phi^{\prime}$ that for $( {\bf y}_1,{\bf y}_2,t_1,t_2)$ satisfying $\Phi^{\prime}(P_{t_{1}}^{[1]}P_{t_{2}}^{[2]}(g)( {\bf y}_1,{\bf y}_2))\not=0$, we have $|P_{t_{1}}^{[1]}P_{t_{2}}^{[2]}(f)( {\bf y}_1,{\bf y}_2)|\leq\alpha$.

As a consequence, we have
\begin{align*}
{\rm II_{2222}}&=\iiiint_{\widetilde M\times \mathbb{R}_{+}\times\mathbb{R}_{+}}|P_{t_{1}}^{[1]}P_{t_{2}}^{[2]}(f)( {\bf y}_1,{\bf y}_2)|^{2}
|\Phi^{\prime}(P_{t_{1}}^{[1]}P_{t_{2}}^{[2]}(g)( {\bf y}_1,{\bf y}_2))|^{2}\\
&\hspace{1.0cm} \times |\nabla_{t_{2},M_{2}}\nabla_{t_{1},M_{1}}P_{t_{1}}^{[1]}P_{t_{2}}^{[2]}(g)( {\bf y}_1,{\bf y}_2)|^{2}t_{1}t_{2}dt_{1}dt_{2}d{\bf y}_{1}d{\bf y}_{2}\\
&\leq C\alpha^{2}\iiiint_{\widetilde M\times \mathbb{R}_{+}\times\mathbb{R}_{+}}|t_{1}\nabla_{t_{1},M_{1}}P^{[1]}_{t_{1}}t_{2}\nabla_{t_{2},M_{2}}P^{[2]}_{t_{2}}(g)( {\bf y}_1,{\bf y}_2)|^{2}\frac{dt_{1}dt_{2}d{\bf y}_{1}d{\bf y}_{2}}{t_{1}t_{2}}\\
&= C\alpha^{2}\iiiint_{\widetilde M\times \mathbb{R}_{+}\times\mathbb{R}_{+}}|t_{1}\nabla_{t_{1},M_{1}}P^{[1]}_{t_{1}}t_{2}\nabla_{t_{2},M_{2}}P^{[2]}_{t_{2}}(1-g)( {\bf y}_1,{\bf y}_2)|^{2}\frac{dt_{1}dt_{2}d{\bf y}_{1}d{\bf y}_{2}}{t_{1}t_{2}}\\
&\leq C\alpha^{2}\|1-g\|_{L^{2}(\widetilde M)}^{2}\\
&=C\alpha^{2}\left| E_\beta(\alpha)^c\right |,
\end{align*}
where in the second inequality we applied the Littlewood--Paley theory again.

Similar to the  estimate of ${\rm II_{222}}$, we obtain that
\begin{align*}
{\rm II_{23}}\leq C\alpha^{2}\left|\{( {\bf y}_1,{\bf y}_2)\in\widetilde M:\ \mathcal{N}_{P}^{\beta}(f)( {\bf y}_1,{\bf y}_2)>\alpha\}\right |.
\end{align*}

Finally, we turn to estimate ${\rm II_{24}}$. By the chain rule,
\begin{align*}
&\nabla_{t_{1},M_{1}}\Phi(P_{t_{1}}^{[1]}P_{t_{2}}^{[2]}(g)( {\bf y}_1,{\bf y}_2))\nabla_{t_{2},M_{2}}\nabla_{t_{2},M_{2}}\nabla_{t_{1},M_{1}}\Phi(P_{t_{1}}^{[1]}P_{t_{2}}^{[2]}(g)( {\bf y}_1,{\bf y}_2))\\
&=(\Phi^{\prime}\Phi^{\prime\prime\prime})(P_{t_{1}}^{[1]}P_{t_{2}}^{[2]}(g)( {\bf y}_1,{\bf y}_2))|\nabla_{t_{1},M_{1}}P_{t_{1}}^{[1]}P_{t_{2}}^{[2]}(g)( {\bf y}_1,{\bf y}_2)|^{2}|\nabla_{t_{2},M_{2}}P_{t_{1}}^{[1]}P_{t_{2}}^{[2]}(g)( {\bf y}_1,{\bf y}_2)|^{2}\\
&\hspace{0.5cm} +2(\Phi^{\prime}\Phi^{\prime\prime})(P_{t_{1}}^{[1]}P_{t_{2}}^{[2]}(g)( {\bf y}_1,{\bf y}_2))\nabla_{t_{1},M_{1}}P_{t_{1}}^{[1]}P_{t_{2}}^{[2]}(g)( {\bf y}_1,{\bf y}_2)\nabla_{t_{2},M_{2}}P_{t_{1}}^{[1]}P_{t_{2}}^{[2]}(g)( {\bf y}_1,{\bf y}_2)\\
&\hspace{1cm} \times \nabla_{t_{1},M_{1}}\nabla_{t_{2},M_{2}}P_{t_{1}}^{[1]}P_{t_{2}}^{[2]}(g)( {\bf y}_1,{\bf y}_2).
\end{align*}
Thus, ${\rm II_{24}}$ can be dominated by ${\rm II_{241}}+{\rm II_{242}}$ with respect to the above two terms in the integrand respectively. Using the same argument, we get that
\begin{align*}
{\rm II_{241}}\leq C\alpha^{2}\left| E_\beta(\alpha)^c\right |.
\end{align*}
Next, it follows from the support property of $\Phi^{\prime}\Phi^{\prime\prime}$ and H\"{o}lder's inequality that
\begin{align*}
{\rm I_{242}}&\leq C\alpha^{2}\iiiint_{\widetilde M\times \mathbb{R}_{+}\times\mathbb{R}_{+}} |\nabla_{t_{1},M_{1}}P_{t_{1}}^{[1]}P_{t_{2}}^{[2]}(g)( {\bf y}_1,{\bf y}_2)|\,|\nabla_{t_{2},M_{2}}P_{t_{1}}^{[1]}P_{t_{2}}^{[2]}(g)( {\bf y}_1,{\bf y}_2)|\\
&\hspace{1cm} \times |\nabla_{t_{1},M_{1}}\nabla_{t_{2},M_{2}}P_{t_{1}}^{[1]}P_{t_{2}}^{[2]}(g)( {\bf y}_1,{\bf y}_2)|t_{1}t_{2}dt_{1}dt_{2}d{\bf y}_{1}d{\bf y}_{2}\\
&\leq C\alpha^{2}\bigg(\iiiint_{\widetilde M\times \mathbb{R}_{+}\times\mathbb{R}_{+}} |\nabla_{t_{1},M_{1}}P_{t_{1}}^{[1]}P_{t_{2}}^{[2]}(g)( {\bf y}_1,{\bf y}_2)|^{2}\\
&\hspace{5.1cm} \times |\nabla_{t_{2},M_{2}}P_{t_{1}}^{[1]}P_{t_{2}}^{[2]}(g)( {\bf y}_1,{\bf y}_2)|^{2}t_{1}t_{2}dt_{1}dt_{2}d{\bf y}_{1}d{\bf y}_{2}\bigg)\\
&\qquad+ C\alpha^{2}\left(\iiiint_{\widetilde M\times \mathbb{R}_{+}\times\mathbb{R}_{+}} |\nabla_{t_{1},M_{1}}\nabla_{t_{2},M_{2}}P_{t_{1}}^{[1]}P_{t_{2}}^{[2]}(g)( {\bf y}_1,{\bf y}_2)|^{2}t_{1}t_{2}dt_{1}dt_{2}d{\bf y}_{1}d{\bf y}_{2}\right)\\
&\leq C\alpha^{2} \left| E_\beta(\alpha)^c\right |,
\end{align*}
where the last inequality follows from the estimates of the terms ${\rm II_{2221}}$ and ${\rm II_{2222}}$, that is, via taking the vector-valued Hardy--Littlewood maximal function estimate and then the Littlewood--Paley estimates.

Combining these estimates together, we conclude that
\begin{align*}
\iint_{A_{\beta}(\alpha)}S_{P}(f)( {\bf x}_{1},{\bf x}_{2})^2d{\bf x}_{1}d{\bf x}_{2}
\leq C\iint_{E_\beta(\alpha)}\mathcal{N}_{P}^{\beta}(f)( {\bf y}_{1},{\bf y}_{2})^{2}d{\bf y}_{1}d{\bf y}_{2}+C\alpha^{2} \left|E_\beta(\alpha)^c\right |.
\end{align*}
Hence, combining \eqref{good lambda} and the above estimate, we see that 
\begin{align*}
&\left|\{( {\bf y}_1,{\bf y}_2)\in \widetilde M:\ S_{P}(f)( {\bf y}_1,{\bf y}_2)>\alpha\}\right|\\
&\leq C\left|\{( {\bf y}_1,{\bf y}_2)\in\widetilde M:\ \mathcal{N}_{P}^{\beta}(f)( {\bf y}_1,{\bf y}_2)>\alpha\}\right |+\frac{C}{\alpha^{2}}\iint_{E_\beta(\alpha)}\mathcal{N}_{P}^{\beta}(f)( {\bf y}_1,{\bf y}_2)^{2}d{\bf y}_1d{\bf y}_{2}.
\end{align*}

The proof of Theorem \ref{main thm} is complete.

\section{Applications}\label{Sec4}
\setcounter{equation}{0}

We address two direct applications of Theorem \ref{main thm}.

\subsection{Weak endpoint estimate for Cauchy--Szeg\H{o} projection on $\widetilde M$}

We recall that in the well-known result of Diaz \cite{Diaz} (see also recent result \cite{CLTW}),  the explicit pointwise size estimate and regularity estimate of the Cauchy--Szeg\H{o} kernel on $M$ is given as follows.

\noindent {\bf Theorem A.}\ \ 
{\it For ${\bf x}$ and ${\bf y}$ in $M$ with ${\bf x}\not={\bf y}$,  the Cauchy--Szeg\H{o} projection ${\bf S}$ associated with the kernel  $S({\bf x},{\bf y})$ is a Calder\'on--Zygmund operator, i.e.,
\begin{align*}
|S({\bf x},{\bf y})|
\approx{1\over V({\bf x},{\bf y})};
\end{align*}
there is $\epsilon>0$ such that for ${\bf x}\not={\bf x}'$ and
for $ d({\bf x},{\bf x}')\leq  c d({\bf x},{\bf y})$
with some small positive constant  $c$,
\begin{align*}
|S({\bf x},{\bf y})-S({\bf x}',{\bf y})|
 \leq C_1  { 1\over V({\bf x},{\bf y}) } \ \bigg({d({\bf x},{\bf x}')\over d({\bf x},{\bf y})} \bigg) ^{\epsilon}; 
 \end{align*}
 for ${\bf y}\not={\bf y}'$ and
for $ d({\bf y},{\bf y}')\leq  c d({\bf x},{\bf y})$
with some small positive constant  $c$,
\begin{align*} 
|S({\bf x},{\bf y})-S({\bf x},{\bf y}')|
 \leq C_1  { 1\over V({\bf x},{\bf y}) } \ \bigg({d({\bf y},{\bf y}')\over d({\bf x},{\bf y})} \bigg) ^{\epsilon}  
 \end{align*}
with some constant $C_1>0$.
}

Consider $\widetilde M=M_1\times M_2$. The Cauchy--Szeg\H{o}
projection  ${\widetilde {\bf S}}={\bf S}_1\circ {\bf S}_2$ is a product Calder\'on--Zygmund operator on $L^2(\widetilde M)$, where ${\bf S}_j$ is the Cauchy--Szeg\H{o}
projection  on $M_j$ for $j=1,2$. The general framework of 
product Calder\'on--Zygmund operator on space of homogeneous type was studied in \cite{HLLin}.

Following the framework in \cite{CLLP}, we see that Theorem \ref{main thm} gives rise to the weak endpoint estimate of $S_P(f)({\bf x}_{1},{\bf x}_{2})$. To be more explicit, following \cite{CoFe} we first see that there is $C>0$ such that for all $\lambda>0$,
$$ |\{ ({\bf x}_{1},{\bf x}_{2})\in\widetilde M: \mathcal M(f)({\bf x}_{1},{\bf x}_{2})>\lambda\}| \leq C\|\lambda^{-1}f\|_{L \log L(\widetilde M )},$$
where $$\|f\|_{L \log L(\widetilde M )}= \iint_{\widetilde M} |f({\bf x}_{1},{\bf x}_{2})|\log(e+|f({\bf x}_{1},{\bf x}_{2})|)d{\bf x}_{1}d{\bf x}_{2}.$$
Then Theorem \ref{main thm} yields that 
$ |\{ ({\bf x}_{1},{\bf x}_{2})\in\widetilde M: S_P(f)({\bf x}_{1},{\bf x}_{2})>\lambda\}| \leq C\|\lambda^{-1}f\|_{L \log L(\widetilde M )},$
and hence we have
\begin{align}
|\{ ({\bf x}_{1},{\bf x}_{2})\in\widetilde M: {\widetilde {\bf S}}(f)({\bf x}_{1},{\bf x}_{2})>\lambda\}| \leq C\|\lambda^{-1}f\|_{L \log L(\widetilde M )}.
\end{align}

\subsection{Maximal function characterisation for product Hardy space on $\widetilde M$}

For the sake of simplicity, we take Hardy space $H^1(\widetilde M)$ as an application. The same argument holds for  $H^p(\widetilde M)$ for $p<1$ but with a more complicated but very standard definition via distributions. 

We now define 
\begin{align}
H_{S_P}^1(\widetilde M)= \{f\in L^1(\widetilde M): S_P(f)\in L^1(\widetilde M)\}.
\end{align}
Following the Plancherel--Polya inequality in \cite{HLL2}, we see that $H_{S_P}^1(\widetilde M)$ is equivalent to the Hardy space $H^1(\widetilde M)$ in \cite{HLL2} given via discrete square function (or discrete area functions) using the discrete reproducing formula via approximation to identity.  Thus, $H_{S_P}^1(\widetilde M)$ has atomic decomposition, which we refer to \cite{HLPW}.

Next, we define 
\begin{align}
H_{max}^1(\widetilde M)= \{f\in L^1(\widetilde M): \mathcal{N}_P^{\beta}(f)\in L^1(\widetilde M)\}.
\end{align}

Then we have the following equivalence characterisation.
\begin{prop}\label{prop max}
$H_{S_P}^1(\widetilde M)$ coincides with $H_{max}^1(\widetilde M)$ and they have equivalent norms.
\end{prop}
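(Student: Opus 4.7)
The plan is to prove the two inclusions separately. The easy direction, $H_{S_P}^1(\widetilde M)\subseteq H_{max}^1(\widetilde M)$ being excluded (as $\mathcal N_P^\beta$ controls $S_P$ in $L^1$), I mean the direction $H_{max}^1(\widetilde M)\subseteq H_{S_P}^1(\widetilde M)$, follows directly from Theorem~\ref{main thm} by integrating the good-$\lambda$ inequality against $d\lambda$. Applying the layer cake representation and Fubini gives
\begin{align*}
\|S_P(f)\|_{L^1(\widetilde M)}
&\leq C\|\mathcal N_P^\beta(f)\|_{L^1(\widetilde M)}
+C\int_0^\infty \frac{1}{\lambda^2}\int_0^\lambda s\,|\{\mathcal N_P^\beta(f)>s\}|\,ds\,d\lambda\\
&=C\|\mathcal N_P^\beta(f)\|_{L^1(\widetilde M)}+C\int_0^\infty s\,|\{\mathcal N_P^\beta(f)>s\}|\int_s^\infty\frac{d\lambda}{\lambda^2}\,ds
\leq C'\|\mathcal N_P^\beta(f)\|_{L^1(\widetilde M)},
\end{align*}
which yields the first inclusion with norm control.

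The converse direction $H_{S_P}^1(\widetilde M)\subseteq H_{max}^1(\widetilde M)$ will be obtained by atomic decomposition. By the Plancherel--P\'olya type equivalence recorded in the statement (via \cite{HLL2,HLPW}), any $f\in H_{S_P}^1(\widetilde M)$ admits a decomposition $f=\sum_k\lambda_k a_k$ with $\sum_k|\lambda_k|\lesssim \|S_P(f)\|_{L^1(\widetilde M)}$, where each $a_k$ is a product $(1,2)$-atom in the sense of Chang--Fefferman adapted to $\widetilde M$: namely $a_k$ is supported in an open set $\Omega_k\subset\widetilde M$ of finite measure, satisfies $\|a_k\|_{L^2(\widetilde M)}\leq|\Omega_k|^{-1/2}$, and decomposes into rectangle atoms supported on dyadic rectangles $R\subset\Omega_k$ with the appropriate cancellation in each variable. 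It therefore suffices to prove the uniform bound $\|\mathcal N_P^\beta(a)\|_{L^1(\widetilde M)}\leq C$ for any such atom~$a$.

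To bound $\|\mathcal N_P^\beta(a)\|_{L^1(\widetilde M)}$, I would split the integral according to a Journ\'e-type enlargement $\widetilde\Omega\supset\Omega$. On $\widetilde\Omega$, I would use $|\widetilde\Omega|\lesssim|\Omega|$ together with the Cauchy--Schwarz inequality and the $L^2$-boundedness of $\mathcal N_P^\beta$, which itself follows from the pointwise domination $\mathcal N_P^\beta(a)\lesssim \mathcal M_S(a)$ of \eqref{P f M f prod} and the boundedness of the strong maximal function $\mathcal M_S$ on $L^2(\widetilde M)$; combined with the $L^2$ size of the atom this gives a uniform bound on this piece. On $\widetilde M\setminus \widetilde\Omega$, I would expand $a$ into its rectangle components $a_R$ and, using the pointwise Poisson kernel bound \eqref{P size} in each factor together with the cancellation of $a_R$ in each variable, extract an extra decay factor in the distance from each rectangle side; summing the resulting geometric series over the rectangles inside $\Omega$ and using the Carleson-type condition from the atomic definition produces a bound by $C$ independent of the atom, as in the classical product theory.

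The principal obstacle is the estimate on the complement of $\widetilde\Omega$: on a product space of homogeneous type one does not have Fourier tools, so the decay from the cancellation of $a_R$ in one variable has to be combined with a delicate geometric argument measuring how deeply the projection of $\Omega$ onto each factor can fail to exhaust the dyadic scales (this is exactly what the Journ\'e enlargement is designed to control). Once that summation is executed, combining the two regions yields $\|\mathcal N_P^\beta(a)\|_{L^1(\widetilde M)}\leq C$, and summing over $k$ with $\sum_k|\lambda_k|\lesssim\|S_P(f)\|_{L^1(\widetilde M)}$ finishes the proof.
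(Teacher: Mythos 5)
Your proposal follows exactly the paper's route: the direction $H_{\max}^1\subset H_{S_P}^1$ is obtained from Theorem~\ref{main thm} (your Fubini/layer-cake computation just spells out what the paper states as ``follows from Theorem~\ref{main thm}''), and the direction $H_{S_P}^1\subset H_{\max}^1$ is obtained by the atomic decomposition of \cite{HLPW} together with a uniform $L^1$ bound $\|\mathcal N_P^\beta(a)\|_{L^1(\widetilde M)}\le C$ for product atoms, proved via the $L^2$ boundedness of $\mathcal N_P^\beta$ on a Journ\'e enlargement $\widetilde\Omega$ and Poisson-kernel size/cancellation off it. This is precisely the argument the paper invokes (terse, citing \cite{P} for Journ\'e's lemma), so you have the right proof. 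Two small points worth tightening: (i) the paper first takes $f\in H_{S_P}^1(\widetilde M)\cap L^2(\widetilde M)$ so that the atomic decomposition converges in a topology in which $\mathcal N_P^\beta(f)\le\sum_k|\lambda_k|\,\mathcal N_P^\beta(a_k)$ is legitimate, and then passes to general $f$ by density; your sketch should record this. (ii) The opening sentence of your proof is garbled (you first name the wrong direction and then correct yourself mid-sentence); it would be cleaner to simply say that $H_{\max}^1\subset H_{S_P}^1$ is the consequence of Theorem~\ref{main thm}.
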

\begin{proof}
Suppose $f\in H_{S_P}^1(\widetilde M)\cap L^2(\widetilde M)$, then following \cite{HLPW} $f$ has an atomic decomposition $f=\sum_j\lambda_ja_j$ where each $a_j$ is a product atom and $\sum_j|\lambda_j|\approx \|f\|_{H_{S_P}^1(\widetilde M)}$.
Thus, it suffices to show that $\|\mathcal{N}_P^{\beta}(a_j)\|_{ L^1(\widetilde M)}\leq C$ for every atom $a_j$, where $C$ is an absolute constant. This is a standard argument, which follows from the size and cancellation of $a_j$,  the size and regularity estimates of the Poisson kernels $P_{t_1}^{[1]}$ and $P_{t_2} ^{[2]}$, and Journ\'e's covering lemma \cite{P}. Thus, we have 
$\|f\|_{H_{max}^1(\widetilde M)}\leq C \|f\|_{H_{S_P}^1(\widetilde M)}$. Then 
via the density of $H_{S_P}^1(\widetilde M)\cap L^2(\widetilde M)$ in $H_{S_P}^1(\widetilde M)$, we see that 
$ H_{S_P}^1(\widetilde M) \subset H_{max}^1(\widetilde M) $.

Then reverse direction $\|f\|_{H_{S_P}^1(\widetilde M)}\leq C \|f\|_{H_{max}^1(\widetilde M)}$ follows from Theorem \ref{main thm} and hence $ H_{max}^1(\widetilde M) \subset H_{S_P}^1(\widetilde M) $.

The proof is complete.
\end{proof}

Proposition \ref{prop max} provides the maximal function characterisation of $H^1(\widetilde M)$.

\begin{remark}
Theorem \ref{main thm}  and the above Proposition also give a new proof of the maximal function characterisation of product Hardy space associated with the Bessel operator in \cite{DLWY}.
\end{remark}

\bigskip
\bigskip
\noindent{\bf Acknowledgement:} Ji Li would like to thank Michael Cowling and Liangchuan Wu for suggestions.
Ji Li is supported by ARC DP 220100285.
\bigskip

\bigskip

\begin{flushleft}
School of Mathematical and Physical Sciences, Macquarie University, NSW 2109, Australia\\
E-mail address: ji.li@mq.edu.au
\end{flushleft}

\end{document}